\documentclass{amsart}

\title[Schubert Calculus and Uniform Property $\Gamma$]{Schubert Calculus and Uniform Property $\Gamma$}
\author{Andrew S. Toms}
\address{Department of Mathematics, Purdue University, 150 N University St, West Lafayette, IN 47906, USA}
\email{atoms@purdue.edu}

\usepackage{amsmath,amssymb,amsfonts,amsthm,mathrsfs}
\usepackage{xcolor}
\usepackage{tikz}
\usepackage{float}
\usepackage[colorlinks=true,linkcolor=blue,citecolor=blue,urlcolor=blue]{hyperref}
\hypersetup{
  pdftitle={Schubert Calculus and the Quadratic Threshold for Uniform Property Gamma},
  pdfauthor={Andrew S. Toms}
}

\usetikzlibrary{arrows.meta,calc,decorations.pathreplacing}

\providecolor{PurdueGold}{HTML}{CEB888}
\providecolor{PurdueBlack}{HTML}{111111}
\providecolor{DeepBlue}{HTML}{17324D}
\providecolor{AccentRed}{HTML}{8A1538}

\theoremstyle{plain}
\newtheorem{theorem}{Theorem}
\newtheorem{proposition}[theorem]{Proposition}
\newtheorem{lemma}{Lemma}
\theoremstyle{definition}
\newtheorem{definition}{Definition}

\theoremstyle{remark}
\newtheorem{remark}{Remark}

\makeatletter
\let\ams@abstract\abstract
\let\endams@abstract\endabstract
\renewcommand{\abstract}[1]{\ams@abstract#1\endams@abstract}
\def\arxiv@stripmscyear 2020: #1\@nil{#1}
\newcommand{\pacs}[2][]{%
  \subjclass[2020]{\arxiv@stripmscyear#2\@nil}}
\makeatother

\AtBeginDocument{\bibliographystyle{amsplain}}
\begin{document}

\abstract{
We construct a simple, separable, unital, nuclear C$^*$-algebra without uniform
property $\Gamma$.  This identifies a new geometric threshold in the theory of nuclear C$^*$-algebras, {\it quadratic dimension growth}. The Thom--Porteous class of a pair of equal-rank bundles over a Grassmannian forces every bundle map between them to vanish
somewhere, and a Schubert calculus calculation propagates this obstruction
across an inductive system to a simple limit.  In the uniform tracial completion of the limit the surviving
obstruction gives projections with identical values on all designated traces
which are not Murray--von Neumann equivalent. This precludes uniform property $\Gamma$.
The Thom--Porteous class responsible for the obstruction lives in degree
quadratic in the forced rank loss. This places the example exactly at the
quadratic dimension-growth scale, synonymous with linear growth in the
natural \(2\)-norm scale, and identifies that scale as the relevant threshold
for uniform property \(\Gamma\).
}

\keywords{Schubert Calculus, Operator Algebras, uniform property $\Gamma$}
\pacs[MSC Classification]{2020: Primary 46L05; Secondary 46L35, 46L80, 14N15, 14C17, 55R25.}
\maketitle
\section{Introduction}

Property $\Gamma$ for $\mathrm{II}_1$ factors was introduced in the foundational work of Murray and von Neumann \cite{MurrayVonNeumann4}.  It distinguishes the hyperfinite factor $\mathcal{R}$ from the free group factors $L(\mathbb{F}_n)$, $n \geq 2$.  It came to further prominence in the 1970s as a key ingredient in Connes' classification of injective factors, where injectivity in the presence of Property $\Gamma$ yields the McDuff property \cite{Connes:Ann}.  Property $\Gamma$ has a natural incarnation of similar import among tracial C$^*$-algebras known as {\it uniform property $\Gamma$} \cite{CETWW:IM}.  It is motivated by the pursuit of a solution to the Toms-Winter regularity conjecture for simple nuclear C$^*$-algebras, which holds that the properties of finite nuclear dimension, $\mathcal{Z}$-stability, and strict comparison coincide for simple separable nuclear C$^*$-algebras and moreover characterize the natural maximal class for which Elliott-type classification via K-theory and traces can hold (modulo the UCT) \cite{TomsWinter:JFA}, \cite{Winter:IM12}.  Indeed, the Toms-Winter conjecture holds in full in the presence of uniform property $\Gamma$ \cite{CETWW:IM}.  It has therefore been a matter of high interest to determine whether every simple separable nuclear C$^*$-algebra has this property.  This is laid out explicitly as Question XIX of \cite{STW:99problems}, which we resolve negatively here:

Until the late 1990s, all of the stock-in-trade simple separable nuclear C$^*$-algebras were tame in the sense that they ultimately have been found to satisfy the Toms-Winter conjecture and enjoy uniform property $\Gamma$. The first example of a simple nuclear C$^*$-algebra which did not satisfy any of the properties of the regularity conjecture was given by Villadsen in 1997 when he constructed an algebra whose ordered $\mathrm{K}_0$-group failed to be weakly unperforated, precluding $\mathcal{Z}$-stability \cite{Villadsen:JFA}. A second construction in 1999 gave the first examples of such algebras with arbitrary stable rank \cite{Villadsen:JAMS}---again, not $\mathcal{Z}$-stable.  Yet despite their pathologies, these algebras nevertheless have uniform property $\Gamma$, the former by recent work of Vaccaro \cite{Vaccaro:preprint} and the latter by \cite{EvingtonSchafhauser:finitedim}.

\begin{theorem}\label{simplenogamma}
    There is a unital simple separable nuclear non-elementary C$^*$-algebra without uniform property $\Gamma$.
\end{theorem}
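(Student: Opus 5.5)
The plan is to build the algebra as a Villadsen-type inductive limit $A=\varinjlim (A_i,\phi_i)$ of homogeneous algebras $A_i = p_i\bigl(C(X_i)\otimes \mathcal{K}\bigr)p_i$. The base spaces $X_i$ will be products of complex Grassmannians $\mathrm{Gr}_k(\mathbb{C}^n)$. The connecting maps $\phi_i$ will be direct sums of coordinate-projection maps, which carry the topology forward, together with a sparse set of point evaluations; the point evaluations make the limit simple, and the density of evaluation points is chosen so that it does not destroy the topology. In the first stage we fix a pair of equal-rank bundles $E,F$ over the Grassmannian, built from the tautological and quotient bundles. Their Thom--Porteous class for the rank-$r$ degeneracy locus, namely the determinant $\det\bigl(c_{f-e+r+j-i}(F-E)\bigr)$ of size $(e-r)\times(f-r)$, lives in degree quadratic in the rank loss. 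The first concrete step is to prove, by Schubert calculus (Giambelli/Pieri on Schur classes of the tautological bundles), that this class is nonzero. It follows that every bundle map $E\to F$ drops rank by at least $r$ at some point. Equivalently, the corresponding projections $p_E,p_F\in A_1$ are not Murray--von Neumann equivalent after cutting down by any isometry-like partial correction.

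The second stage propagates the obstruction along the inductive system. Under $\phi_i$ the images of $E$ and $F$ are pullbacks along the coordinate projections, direct-summed with trivial bundles coming from the point evaluations. By naturality of Chern classes and the Whitney formula, the Thom--Porteous class at stage $i$ is the external product of the stage-one class with contributions from the other factors. It survives provided that its degree stays below $\dim_{\mathbb{R}} X_i$ and that the Schubert product remains nonzero. The key numerical requirement is that the forced rank loss $r_i$ satisfies $r_i^2 \gtrsim \operatorname{rank}(p_i)\cdot \epsilon$. In other words, $\dim X_i$ must grow quadratically in the matrix size, so that the normalized rank loss $r_i/\operatorname{rank}(p_i)$ stays bounded below uniformly in $i$. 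This is the quadratic-dimension-growth condition announced in the abstract, and I expect it to be the main obstacle. One has to choose Grassmannian dimensions and multiplicities so that the top-degree Schubert product of the relevant Chern classes is nonzero at every stage, which is a positivity statement: Littlewood--Richardson coefficients are nonnegative, so the task is to exhibit one nonzero term. Simultaneously the number of point evaluations must be large enough to force simplicity. My first attempt would be to take $X_{i+1}=X_i^{m_i}$ with Grassmannian factors, and to pass the nonvanishing along by the Künneth theorem, using the nonvanishing of the top Chern class of a tensor or sum of quotient bundles.

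The third stage converts the topological obstruction into the failure of uniform property $\Gamma$. Let $\tau$ range over the designated traces, which are the ones induced from normalized measures on the $X_i$ and are weak$^*$-dense in $T(A)$. The images of $p_E$ and $p_F$ in the uniform tracial completion $\overline{A}^{u}$ have equal values on all traces, since the two bundles have equal rank. Suppose uniform property $\Gamma$ held. Then $\overline{A}^u$ would have tracial comparison of projections (via CETWW's results, projections that agree on all traces are equivalent), so there would be $v$ with $v^*v=p_E$ and $\|vv^*-p_F\|_{2,u}<\delta$. Approximating $v$ at a finite stage in $2$-norm gives a bundle map $E\to F$ that is nearly isometric off a set of small normalized rank defect. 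Stage two, however, forces rank loss at least $r_i$ at some point, and by a continuity argument on a neighborhood of positive measure under a suitable designated trace this gives $2$-norm defect bounded below by $c\,r_i/\operatorname{rank}(p_i)\ge c\epsilon$. This is the linear $2$-norm scale, and it yields a contradiction. Finally, $A$ is unital, separable, and nuclear because it is an inductive limit of type I algebras; simplicity comes from the point evaluations; and $A$ is non-elementary since the ranks tend to infinity.
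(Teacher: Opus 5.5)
Your first and third stages follow the same outline as the paper. Your tautological/quotient pair even works at stage one: there $\Delta_d(c(Q-S))$ is the Euler class of $T\operatorname{Gr}(d,2d)$, which evaluates to $\binom{2d}{d}\neq 0$. The genuine gap is the second stage, and it cannot be closed by tuning parameters. With the connecting maps you describe, the obstruction provably dies. These are untwisted coordinate projections $a\mapsto\operatorname{diag}(a\circ\pi_1,\dots,a\circ\pi_{m_i})$ plus point evaluations, on $X_{i+1}=X_i^{m_i}$.

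Indeed, $\dim X_{i+1}=m_i\dim X_i$ while every rank is multiplied by at least $m_i$. So $\dim X_i/\operatorname{rank}(p_i)$ is non-increasing, and dimension growth is at most linear. The right requirement is $r_i\geq\epsilon\operatorname{rank}(p_i)$, not $r_i^2\gtrsim\epsilon\operatorname{rank}(p_i)$. But forcing a kernel of dimension $r_i$ is impossible once $2r_i^2>\dim X_i$. Matrices with kernel of dimension at least $r_i$ form a subvariety of real codimension $2r_i^2$, so a generic bundle map $E_i\to F_i$ misses the degeneracy locus altogether. Under linear growth this eventually happens for every fixed $\epsilon>0$. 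This is consistent with the Kessinger--Toms theorem quoted in the introduction: subquadratic growth yields uniform $\Gamma$.

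Adding extra Grassmannian factors to $X_{i+1}$ does not help. The bundles $\bigoplus_j\pi_j^*E$ and $\bigoplus_j\pi_j^*F$ produced by such maps are pulled back from the copies of $X_i$ and never see the new factors. Your phrase ``the external product of the stage-one class with contributions from the other factors'' hides exactly this. For $E_1\oplus E_2\to F_1\oplus F_2$ with doubled kernel $2r$, the class has complex degree $4r^2$, whereas $\Delta_r\times\Delta_r$ has degree $2r^2$. The missing half must come from the cross factors $\prod(\alpha-\delta)\prod(\gamma-\beta)$, the top Chern classes of the off-diagonal $\operatorname{Hom}$ bundles. That is where new cohomology has to be supplied.

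The missing idea is the paper's Villadsen second-type twist. Take $X_{i+1}=X_i\times X_i\times\mathbb{CP}^{2d_i^2}$ and $\phi_i(a)=(\pi_1^*a\otimes\gamma_{i,1})\oplus(\pi_2^*a\otimes\gamma_{i,2})$, with $\gamma_{i,1},\gamma_{i,2}$ pulled back from $\mathcal O(1),\mathcal O(2)$. This keeps $\dim X_i=2d_i^2$ exactly. By the resultant identity $s_{(n^n)}(U-V)=\prod_{\xi,\nu}(\xi-\nu)$, the diagonal blocks give $\pi_1^*\Delta_{d_i}$ and $\pi_2^*\Delta_{d_i}$. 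The cross blocks become $\prod(\alpha-\delta+u)\prod(\gamma-\beta-u)$ with $u=c_1(\gamma_{i,1}\otimes\gamma_{i,2}^{-1})$. Their top $u$-term yields $\pm\pi_1^*\Delta_{d_i}\cdot\pi_2^*\Delta_{d_i}\cdot u^{2d_i^2}$, a nonzero top class by K\"unneth. This also lets the paper demand total degeneracy (a nonzero Euler class of $\operatorname{Hom}(p_i,q_i)$), which is cleaner than proportional rank loss.

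Two smaller points:
\begin{itemize}
\item Nonnegativity of Littlewood--Richardson coefficients does not rule out cancellation. Schur classes $s_\mu(F-E)$ of virtual bundles are not positive, and terms in the same K\"unneth bidegree can cancel. The paper isolates its distinguished term by its degree on the $\mathbb{CP}$ factor.
\item In your third stage, ``a neighborhood of positive measure under a suitable trace'' is not enough, because the degeneracy point depends on the approximant. You need, for every stage $m$ and every $x\in X_m$, a trace on the limit whose measure on $X_m$ has an atom of mass at least a fixed $\delta>0$ at $x$. The paper obtains this by following the diagonal branch $x\mapsto(x,x,\ell)$ and keeping the normalized rank of the point evaluations summable.
\end{itemize}
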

\noindent
While this is satisfying, the mere existence of the example of Theorem \ref{simplenogamma} is not the central discovery here.  Rather, it is the identification of the geometric threshold governing the presence
or absence of uniform property \(\Gamma\): {\it quadratic dimension growth}.  We situate this property presently.

Our algebra is an approximately homogeneous (AH) algebra, which has the following general form:
\begin{equation}
 A = \lim (A_i, \phi_i), \ \ \ \   A_i = q_i(C(X_i) \otimes \mathcal{K})q_i,
\end{equation}
where $X_i$ is compact Hausdorff and $q_i \in C(X_i) \otimes \mathcal{K}$ is a projection.  The concrete dimension growth of the system is then obtained by examining when
\begin{equation}\label{dgupper}
\liminf \frac{\dim(X_i)}{f(\mathrm{rank}(q_i))} < \infty
\end{equation}
for an increasing function $f:[0,\infty) \to [0,\infty)$.

If, for instance, (\ref{dgupper}) holds with $f(t) = t^2$, then we say that the dimension growth of $A$ is at most quadratic.  If $f(t) = t$ instead, then the dimension growth of $A$ is at most linear, and so on.  To realize a specific growth rate $f$ for $A$ one then also needs that {\it every} AH presentation for $A$ satisfies
\begin{equation}\label{dflower}
\liminf \frac{\dim(X_i)}{f(\mathrm{rank}(q_i))} >0,
\end{equation}
and this is considerably harder to establish.  The algebras we construct here will satisfy (\ref{dgupper}) for $f(x) = x^2$ and will fail to have uniform property $\Gamma$;  in \cite{KT:subquadratic}, E. Kessinger and the author show that any simple unital AH algebra with a presentation satisfying
\begin{equation}\label{ggrowth}
\liminf \frac{\dim(X_i)}{(\mathrm{rank}(q_i))^2} = 0,
\end{equation}
{\it does} have uniform property $\Gamma$.  The apparent quadratic dimension growth of our example here is therefore essential and we obtain the guiding principle that we consider to be our main result:

\vspace{2mm}
\noindent
\begin{center}
{\bf Quadratic dimension growth is the geometric threshold at which \\ the failure of uniform property $\Gamma$ first occurs. }
\end{center}

\begin{figure}[H]
\centering
\begin{tikzpicture}[scale=0.9, every node/.style={font=\small}]
  \def\h{0.18}
  \coordinate (L) at (0.45,0);
  \coordinate (A) at (2.15,0);
  \coordinate (B) at (4.20,0);
  \coordinate (C) at (6.95,0);
  \coordinate (D) at (8.85,0);
  \coordinate (R) at (11.85,0);

  \draw[thick, {Latex[length=3mm]}-{Latex[length=3mm]}] (0.0,0) -- (12.25,0);

  \fill[DeepBlue!25] ($(L)+(0,-\h)$) rectangle ($(A)+(0,\h)$);
  \fill[DeepBlue!50] ($(A)+(0,-\h)$) rectangle ($(B)+(0,\h)$);
  \fill[PurdueGold!45] ($(B)+(0,-\h)$) rectangle ($(C)+(0,\h)$);
  \fill[PurdueGold!80!black!12] ($(C)+(0,-\h)$) rectangle ($(D)+(0,\h)$);
  \fill[AccentRed!18] ($(D)+(0,-\h)$) rectangle ($(R)+(0,\h)$);

  \draw[thick, PurdueBlack!70] ($(L)+(0,-\h)$) rectangle ($(R)+(0,\h)$);
  \draw[PurdueBlack!70] ($(A)+(0,-\h)$) -- ($(A)+(0,\h)$);
  \draw[PurdueBlack!70] ($(B)+(0,-\h)$) -- ($(B)+(0,\h)$);
  \draw[PurdueBlack!70] ($(C)+(0,-\h)$) -- ($(C)+(0,\h)$);
  \draw[PurdueBlack!70] ($(D)+(0,-\h)$) -- ($(D)+(0,\h)$);

  \node[above=5pt] at ($(L)!0.5!(A)+(0,\h)$) {slow};
  \node[above=5pt] at ($(A)!0.5!(B)+(0,\h)$) {linear};
  \node[above=5pt] at ($(B)!0.5!(C)+(0,\h)$) {superlinear};
  \node[above=5pt] at ($(C)!0.5!(D)+(0,\h)$) {quadratic};
  \node[above=5pt] at ($(D)!0.5!(R)+(0,\h)$) {superquadratic};

  \draw[decorate, decoration={brace, mirror, amplitude=6pt}]
    ($(L)+(0,-\h)$) -- ($(C)+(0,-\h)$)
    node[midway, below=10pt, align=center] {subquadratic/2-norm slow growth \\ uniform $\Gamma$ holds};

  \draw[decorate, decoration={brace, mirror, amplitude=6pt}]
    ($(C)+(0,-\h)$) -- ($(R)+(0,-\h)$)
    node[midway, below=10pt, align=center, text width=3.8cm]
    {quadratic or faster/ \\2-norm linear or faster \\ uniform $\Gamma$ may fail};
\end{tikzpicture}
\caption{A coarse taxonomy of geometric dimension-growth regimes.}
\label{fig:geometric-thresholds}
\end{figure}

\noindent
Every region of the diagram above is populated by simple separable unital nuclear C$^*$-algebras, and we have drawn an important distinction regarding dimension growth regimes:  the normalized \(2\)-norm on \(M_n\)
is the Hilbert--Schmidt norm divided by \(\sqrt n\), so dimension growth which is quadratic in \(n\) is {\it linear} in the natural \(2\)-norm normalization; all subquadratic growth is hence properly viewed as having the 2-norm analog of the slow dimension growth property (\cite{CGSTW}, \cite{Villadsen:JFA}, \cite{Toms:Ann}, \cite{Toms:JFA}, \cite{Toms:TAMS}, \cite{Toms:Adv}).  Our central conclusion can therefore be recast:  {\it the presence or absence of uniform property $\Gamma$ mirrors the presence or absence of 2-norm slow dimension growth.}  The classical slow dimension growth property pervaded the early days of the classification theory of separable simple nuclear C$^*$-algebras and is known to be equivalent to the properties of the Toms-Winter conjecture for AH algebras.  This at least suggests the presence of a 2-norm version of the Toms-Winter regularity framework in tracially complete C$^*$-algebras.  We defer any finer prognostication for the time being.

The engine of our construction is the Thom--Porteous theory of degeneracy loci, and the obstruction we derive from this theory is propagated across an inductive system through the promised Schubert calculus of the title.  This theory is hardly canon in operator algebras, and so we situate it here relative to better known mechanisms for generating pathology in C$^*$-algebras coming from Villadsen's work and its descendants.

If there is an underlying thesis to Villadsen-type results, it is that unstable phenomena that are present in commutative algebras and their matrix amplifications can often be made to persist in simple nuclear C$^*$-algebras through sufficiently rigid constructions.  Our construction here fits that mold in its broadest outlines, but our methods and obstruction nevertheless represent a sharp break from those that have gone before it.  In Villadsen-style obstructions a desired property is typically translated into an order theoretic property in K-theory or the Cuntz semigroup that is then realized in homogeneous algebras and made durable across an inductive system.  But while these obstructions invariably end with a statement about the failure of comparability of projections that morally should be comparable in light of rank data, our obstruction is instead a careful {\it quantification of failure} in which Villadsen-style arguments can be viewed as a limiting case.
Indeed, suppose $T:E \to F$
is a bundle map from a trivial vector bundle $E$ of rank $k$ into a vector bundle $F$ of rank $l \ge k$ over a finite CW-complex $X$.
If \(T\) is fiberwise injective everywhere, then its image defines a trivial
rank-\(k\) subbundle of $F$. Thus, the classical Villadsen obstruction
to the existence of large trivial subbundles may equivalently be interpreted
as an obstruction to the existence of everywhere-injective bundle maps.

The Thom--Porteous theory generalizes this viewpoint by studying arbitrary
rank-drop conditions for the bundle map $T$. We direct the reader to \cite{FultonIntersectionTheory}, \cite{FultonYoungTableaux} and \cite{ManivelSymmetricFunctions} for a full treatment.  Rather than merely obstructing injectivity, it detects
and quantifies unavoidable rank degeneracy of bundle maps through the Thom--Porteous class
$\Delta_s(F-E)$ of the virtual bundle $F-E$.  We specialize to the case where $\mathrm{rank}(E) = \mathrm{rank}(F) = d$, and our initial model takes $F$ to be the tautological bundle for Grassmannian $\operatorname{Gr}(d,2d)$ and $E$ to be trivial of the same rank.  The nonvanishing of $\Delta_s(F-E)$ entails that at least one fiber map $T_x$ satisfies $\mathrm{rank}(T_x) \leq d-s$, so further specializing to the case $s=d$ forces $T_x = 0$ for some $x \in X$.  This forced vanishing phenomenon is then propagated across an inductive system using Schubert calculus tools including the rectangular resultant formula for supersymmetric Schur functions.  In general
\begin{equation}
    \Delta_d(F-E) \in H^{2d^2}(\operatorname{Gr}(d,2d); \mathbb{Z}),
\end{equation}
so that a non-zero Thom--Porteous class forces the dimension of the base space to be quadratically related to the rank of $F$.  This is the source of our quadratic dimension growth.

Throughout, if \(A\) is a unital \(C^*\)-algebra with nonempty trace simplex
\(T(A)\), we write
\[
   \|a\|_{2,u}=\sup_{\tau\in T(A)} \tau(a^*a)^{1/2},
   \qquad a\in A,
\]
for the uniform tracial \(2\)-seminorm.  The uniform tracial completion \(A^u\) is the quotient of the
\(C^*\)-algebra of operator-norm-bounded
\(\|\cdot\|_{2,u}\)-Cauchy sequences in \(A\) by the ideal of
\(\|\cdot\|_{2,u}\)-null sequences
\cite[Definition~3.19]{CCEGSTW}.
Each \(\tau\in T(A)\) extends uniquely to a uniformly \(2\)-norm continuous
trace on \(A^u\), and these extensions are the designated traces on \(A^u\).
We use the same notation for matrix amplifications, writing \(\tau^{(k)}\)
for the induced trace on \(M_k(A^u)\).

We obstruct uniform property $\Gamma$ by leaning on the fact that a simple unital separable nuclear C$^*$-algebra $A$ with uniform property $\Gamma$ has the property that projections in the uniform tracial completion $A^u$ of $A$ enjoy comparison by designated traces.  In particular, if $P,Q \in A^u$ are projections satisfying $\tau(P) = \tau(Q)$ for every trace on $A$ (here extended to $A^u$), then $P \sim Q$.  Let $V$ be a partial isometry in $A^u$ implementing the equivalence of $P$ and $Q$, so that $V = QVP$.  If $P$ and $Q$ are in the image of $A$ then this last equation can be pulled back to a finite stage of our inductive system approximately in trace, yielding a bundle map $T:E_P \to E_Q$, where $E_P$ and $E_Q$ are bundles corresponding to pre-images of $P$ and $Q$, respectively.  This map is shown to have nonzero rank everywhere on the base space of the finite stage, contradicting the fact that such maps cannot exist in light of our nonvanishing Thom--Porteous class.

We note that by recent work of Vaccaro, any simple unital non-elementary
AH algebra of stable rank one has uniform property \(\Gamma\) \cite{Vaccaro:preprint}. Consequently
the examples constructed here necessarily have stable rank strictly greater
than one, and the Thom--Porteous mechanism developed below provides a new
route to higher stable rank phenomena distinct from the classical constructions
of Villadsen's second type \cite{Villadsen:JAMS}.


We close the introduction by outlining the paper. Section~2 recalls the equal-rank Thom--Porteous obstruction in the form needed below. Section~3 packages this obstruction as a forced-degeneracy criterion and establishes the initial Grassmannian model. Sections~4 and~5 construct the non-simple inductive system and prove, by a Schubert-calculus computation, that the Thom--Porteous obstruction propagates through it. Section~6 shows that the surviving obstruction gives non-equivalent projections with identical designated trace values in the uniform tracial completion, and hence obstructs uniform property $\Gamma$. Section~7 uses a sparse point-evaluation simplefication to obtain the announced simple AH example.

\vspace{2mm}
\noindent
{\bf Acknowledgements:}  The author thanks Prof. Stuart White, the University of Oxford, and the Leverhulme Trust for hosting his tenure of a Leverhulme Trust Visiting Professorship at Oxford in the fall of 2025, and to Prof. White in particular for helpful comments on earlier drafts.  The author thanks ChatGPT for pointing him toward the Thom–Porteous formula and related Schubert-calculus computational tools during the exploratory stage of this project. The mathematical ideas, arguments and proofs in the sequel are due entirely to the author.

\section{Thom--Porteous obstruction in the equal-rank case}

Recall that a projection in a homogeneous \(C^*\)-algebra defines a vector
bundle over the spectrum, and that if \(p,q\in C(X)\otimes\mathcal K\) are
projections then an element of
\[
q(C(X)\otimes\mathcal K)p
\]
is a bundle map from the bundle represented by \(p\) to the bundle represented
by \(q\).

Let \(E,F\to X\) be complex vector bundles of the same rank \(n\) over a
finite CW complex, and let \(T:E\to F\) be a bundle map.  For
\(1\leq s\leq n\), set
\[
D_s(T)=\{x\in X:\dim_{\mathbb C}\ker T_x\geq s\}.
\]
Equivalently, by rank--nullity,
\[
D_s(T)=\{x\in X:\operatorname{rank}(T_x)\leq n-s\}.
\]
This is Fulton's rank-degeneracy locus \(D_{n-s}(T)\); our subscript records
the forced kernel dimension.  The set \(D_s(T)\) is closed, since locally it is
cut out by the vanishing of the \((n-s+1)\times(n-s+1)\) minors of a matrix for
\(T\).  In the extreme case \(s=n\),
\[
D_n(T)=\{x\in X:T_x=0\}.
\]

The associated Thom--Porteous class is
\[
\Delta_s(c(F-E))
:=
\Delta_s^{(s)}(c(F-E))
=
\det(c_{s+j-i}(F-E))_{1\leq i,j\leq s}
=
s_{(s^s)}(F-E),
\]
where \(c(F-E)=c(F)c(E)^{-1}\).  After passing to a splitting space, if the
Chern roots of \(F\) and \(E\) are written as alphabets \(U\) and \(V\),
respectively, this class is the Schur function in a difference of alphabets
\[
s_{(s^s)}(U-V).
\]
This is the Schur-function convention used throughout the Schubert-calculus
calculation in Section~\ref{schubert}.

We use the topological supported form of the Thom--Porteous obstruction.
This point is worth making explicit because the bundle maps arising below from
homogeneous \(C^*\)-algebras are continuous bundle maps, not algebraic morphisms.
For a continuous complex bundle map \(T:E\to F\) over a finite CW complex, the
universal determinantal Thom class pulls back to a class
\[
\widetilde\Delta_s(T)\in H^{2s^2}_{D_s(T)}(X;\mathbb Z)
\]
whose image in ordinary cohomology is
\[
\Delta_s(c(F-E))
=
\det(c_{s+j-i}(F-E))_{1\leq i,j\leq s}.
\]
Thus, if \(D_s(T)=\varnothing\), then
\[
H^{2s^2}_{D_s(T)}(X;\mathbb Z)=0,
\]
and hence
\[
\Delta_s(c(F-E))=0.
\]
Equivalently,
\[
\Delta_s(c(F-E))\neq 0
\quad\Longrightarrow\quad
D_s(T)\neq\varnothing .
\]
See Porteous \cite{PorteousSimpleSingularities} for the topological form and
Fulton \cite[Chapter~14, \S14.4]{FultonIntersectionTheory} or Pragacz
\cite{Pra88} for the intersection-theoretic formulation.

In the extreme case \(s=n\), which is the only case used in the obstruction
below, this can be seen directly without invoking the full Thom--Porteous
machinery. A bundle map \(T:E\to F\) is a section of
\[
\operatorname{Hom}(E,F)\cong F\otimes E^*,
\]
and \(D_n(T)\) is precisely its zero set. If \(D_n(T)=\varnothing\), then
\(F\otimes E^*\) has a nowhere-zero section, so its Euler class, equivalently
its top Chern class, vanishes:
\[
c_{n^2}(F\otimes E^*)=0.
\]
On a splitting space, if \(U=(u_1,\ldots,u_n)\) and \(V=(v_1,\ldots,v_n)\)
are the Chern-root alphabets of \(F\) and \(E\), then
\[
c_{n^2}(F\otimes E^*)
=
\prod_{i,j}(u_i-v_j)
=
s_{(n^n)}(U-V)
=
\Delta_n(c(F-E)).
\]
Thus
\[
\Delta_n(c(F-E))\neq 0
\quad\Longrightarrow\quad
D_n(T)\neq\varnothing,
\]
so every bundle map \(E\to F\) vanishes somewhere.


\section{Forced degeneracy and the initial model}

\subsection{Forced degeneracy of bundle maps}

Let \(X\) be a finite CW complex and let \(E,F\to X\) be complex vector bundles
of the same constant rank \(n\).

\begin{definition}
We say that the pair \((E,F)\) is \emph{\(s\)-degeneracy-forcing} if, for
every bundle map \(T:E\to F\), there exists \(x\in X\) such that
\[
\dim_{\mathbb C}\ker T_x\geq s.
\]
We say that \((E,F)\) is \emph{totally degeneracy-forcing} if it is
\(n\)-degeneracy-forcing, equivalently, if every bundle map \(T:E\to F\)
vanishes at some point of \(X\).
\end{definition}

By the conclusion of Section~2, if
\[
\Delta_s(c(F-E))\neq 0,
\]
then \((E,F)\) is \(s\)-degeneracy-forcing.  In particular, if
\(\Delta_n(c(F-E))\neq 0\), then \((E,F)\) is totally degeneracy-forcing.

\subsection{The initial Grassmannian model}\label{grassman}
Let $X_1=\operatorname{Gr}(d,2d)$, and let $S\to X_1$
be the tautological rank \(d\) bundle. Fix a \(d\)-plane $P\subset \mathbb C^{2d}$
and let $\theta^d=X_1\times P$ be the corresponding trivial rank \(d\) bundle.
We set
\[
q_1=S,
\qquad
p_1=\theta^d.
\]
Thus \(p_1\) and \(q_1\) have the same rank \(d\).
The relevant Thom--Porteous class is
\[
\Delta_d(c(q_1-p_1))
=
\Delta_d(c(S-\theta^d))
\in
H^{2d^2}(\operatorname{Gr}(d,2d);\mathbb Z).
\]
Since \(p_1=\theta^d\) is trivial, this is the Schur class
\(s_{(d^d)}(S)\). By the standard Schubert calculus of the
Grassmannian, equivalently by the Giambelli formula, this is the top
rectangular Schubert class, hence the class of a point up to sign; see
\cite[Chapter 9]{FultonYoungTableaux}. In particular,
\[
\Delta_d(c(q_1-p_1))\neq 0.
\]
Therefore the pair $(p_1,q_1)$ is totally degeneracy-forcing.
Equivalently, every bundle map \mbox{$T:p_1\to q_1$} vanishes somewhere.

\begin{remark}
This is where our quadratic phenomenon first shows up. The obstruction
to forcing a \(d\)-dimensional kernel lives in degree \(2d^2\), which is the
top cohomological degree of \(\operatorname{Gr}(d,2d)\). This initial obstruction can also be realized on $(S^2)^{d^2}$ or on the complex projective space $\mathbb{CP}^{d^2}$.  The Grassmannian model is chosen because it is canonically aligned with Schubert calculus.
\end{remark}

\section{The inductive construction}\label{nonsimpleconstruction}

We now build the non-simple AH limit
\[
A=\varinjlim(A_i,\phi_i),
\qquad
A_i=q_i(C(X_i)\otimes\mathcal K)q_i.
\]
The limit is unital, separable, and nuclear.  It is not simple at this stage,
but the ranks \(\operatorname{rank}(q_i)\) tend to infinity, so it has no
finite-dimensional representations.  Section~6 proves that this non-simple
limit fails uniform property \(\Gamma\); Section~7 then adds sparse
point-evaluation summands to obtain a simple limit while preserving the
Thom--Porteous obstruction in the uniform tracial completion.

Take \(X_1=\operatorname{Gr}(d,2d)\) and \(p_1,q_1\to X_1\) as in
Subsection~\ref{grassman}, and set \(d_1=d\).  Assume that
\(X_i,p_i,q_i\) have been constructed and that
\[
\operatorname{rank}(p_i)
=
\operatorname{rank}(q_i)
=
d_i.
\]
Set
\[
d_{i+1}=2d_i,
\qquad
j_i=2d_i^2,
\]
and define
\[
X_{i+1}=X_i\times X_i\times \mathbb{CP}^{j_i}.
\]
Let \(D_i=\dim(X_i)\), where dimension means covering dimension.  Since
\(\dim(\operatorname{Gr}(d,2d))=2d^2\) and
\(\dim(\mathbb{CP}^{j_i})=2j_i\), we have
\[
D_1=2d_1^2,
\qquad
D_{i+1}=2D_i+2j_i=2D_i+4d_i^2.
\]
As \(d_{i+1}=2d_i\), the ratio \(R_i=D_i/d_i^2\) satisfies
\[
R_{i+1}=\frac{1}{2}R_i+1,
\qquad
R_1=2.
\]
Hence \(R_i=2\) for every \(i\).  Thus the non-simple system has exactly
quadratic dimension growth in this presentation:
\[
\frac{\dim(X_i)}{\operatorname{rank}(q_i)^2}=2.
\]
The simplefication in Section~\ref{sec:simple-example} only enlarges the unit
rank, so the resulting simple system also has at most quadratic dimension growth.
Let
\[
\pi_1,\pi_2:X_{i+1}\to X_i
\]
be the two coordinate projections, and let
\[
\pi_{\mathbb{CP}^{j_i}}:X_{i+1}\to \mathbb{CP}^{j_i}
\]
be the projection onto the projective-space factor.

Choose nontrivial line bundles \(\xi_{i,1}\) and \(\xi_{i,2}\) over
\(\mathbb{CP}^{j_i}\) such that
\[
c_1(\xi_{i,1}\otimes \xi_{i,2}^{-1})^{j_i}\neq 0
\quad
\text{in}
\quad
H^{2j_i}(\mathbb{CP}^{j_i};\mathbb Z).
\]
For example, one may take
\[
\xi_{i,1}=\mathcal O(1),
\qquad
\xi_{i,2}=\mathcal O(2).
\]
Then
\[
\xi_{i,1}\otimes \xi_{i,2}^{-1}
\cong
\mathcal O(-1),
\]
so its first Chern class has nonzero \(j_i\)-th power on
\(\mathbb{CP}^{j_i}\).
Set
\[
\gamma_{i,r}
=
\pi_{\mathbb{CP}^{j_i}}^*(\xi_{i,r}),
\qquad r=1,2.
\]
Define
\[
p_{i+1}
=
(\pi_1^*p_i\otimes \gamma_{i,1})
\oplus
(\pi_2^*p_i\otimes \gamma_{i,2}),
\]
and
\[
q_{i+1}
=
(\pi_1^*q_i\otimes \gamma_{i,1})
\oplus
(\pi_2^*q_i\otimes \gamma_{i,2}).
\]
Then
\[
\operatorname{rank}(p_{i+1})
=
\operatorname{rank}(q_{i+1})
=
2d_i
=
d_{i+1},
\]
and we set
\[
A_{i+1}=q_{i+1}(C(X_{i+1})\otimes\mathcal K)q_{i+1}.
\]
After representing the line bundles \(\gamma_{i,1}\) and \(\gamma_{i,2}\) by
line-bundle projections, the formula
\[
\phi_i(a)
=
(\pi_1^*a\otimes \gamma_{i,1})
\oplus
(\pi_2^*a\otimes \gamma_{i,2})
\]
defines a \(*\)-homomorphism
\[
\phi_i:C(X_i)\otimes\mathcal K\to C(X_{i+1})\otimes\mathcal K.
\]
Since \(\phi_i(q_i)=q_{i+1}\), it restricts to a unital
\(*\)-homomorphism \(\phi_i:A_i\to A_{i+1}\).

For the later comparison argument, we also regard the bundles \(p_i\) as
projections in a fixed matrix amplification.  Choose \(k\) large enough that
the trivial rank-\(d\) bundle \(p_1=\theta^d\) embeds as a subbundle of
\(q_1^{\oplus k}\).  We then regard \(p_1\) as a projection in \(M_k(A_1)\)
and define
\[
p_i=\phi^{(k)}_{i,1}(p_1)\in M_k(A_i),
\]
where
\[
\phi^{(k)}_{i,1}:M_k(A_1)\longrightarrow M_k(A_i)
\]
is the amplified connecting map.  This projection represents the recursively
defined bundle \(p_i\) over \(X_i\), and we use the same notation for the
bundle and for its representing projection.
\begin{remark}
The two operations in the connecting map play different roles. The product
\(X_i\times X_i\) doubles the old obstruction, a technique introduced by Villadsen in \cite{Villadsen:JFA}.  The projective-space
factor and the two nontrivial, non-isomorphic line bundles supply the extra
cohomological degree needed for the next square Thom--Porteous class to be
nonzero, and they mirror Villadsen's techniques in \cite{Villadsen:JAMS}.  Our connecting maps are therefore a blend of Villadsen maps of the first and second type.
\end{remark}
\section{Propagation of forced degeneracy}\label{schubert}
We now prove that the pairs constructed above remain totally degeneracy-forcing at every
stage. We need to show that the relevant Thom--Porteous class remains
nonzero after passing from \(X_i\) to
\[
X_{i+1}=X_i\times X_i\times \mathbb{CP}^{j_i}.
\]
This is the Schubert calculus core of the construction. We give a significantly detailed account as these techniques are generally not familiar to operator algebraists.
\begin{lemma}
Suppose
\[
\Delta_{d_i}(c(q_i-p_i))\neq 0
\quad\text{in}\quad
H^{2d_i^2}(X_i;\mathbb Z).
\]
Then
\[
\Delta_{2d_i}(c(q_{i+1}-p_{i+1}))\neq 0
\quad\text{in}\quad
H^{2(2d_i)^2}(X_{i+1};\mathbb Z).
\]
\end{lemma}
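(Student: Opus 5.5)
The plan is to compute the class directly via the splitting principle, using the product formula from Section~2: for equal-rank bundles \(E,F\) of rank \(n\), \(\Delta_n(c(F-E))=c_{n^2}(F\otimes E^*)=\prod_{a,b}(u_a-v_b)\). Pass to a splitting space for \(p_i\) and \(q_i\) over \(X_i\), and write \(U=(u_1,\dots,u_{d_i})\), \(V=(v_1,\dots,v_{d_i})\) for the Chern roots of \(q_i\) and \(p_i\). Write \(U^{(r)},V^{(r)}\) for their pullbacks under \(\pi_r\), and \(g_r=c_1(\gamma_{i,r})\). The Chern roots of \(q_{i+1}\) are then \(U^{(1)}+g_1\) and \(U^{(2)}+g_2\), and those of \(p_{i+1}\) are \(V^{(1)}+g_1\) and \(V^{(2)}+g_2\). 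Hence \(\Delta_{2d_i}(c(q_{i+1}-p_{i+1}))\) splits as a product of four blocks according to which summands the roots come from.

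The diagonal blocks \((1,1)\) and \((2,2)\) give \(\prod_{a,b}(u^{(r)}_a-v^{(r)}_b)=\pi_r^*\Delta_{d_i}(c(q_i-p_i))\), since the line-bundle twists cancel. Put \(h=g_1-g_2=\pi_{\mathbb{CP}^{j_i}}^*c_1(\xi_{i,1}\otimes\xi_{i,2}^{-1})\). The off-diagonal blocks are
\[
\prod_{a,b}\bigl(h+u^{(1)}_a-v^{(2)}_b\bigr)
\qquad\text{and}\qquad
\prod_{a,b}\bigl(-h+u^{(2)}_a-v^{(1)}_b\bigr).
\]
These are the top Chern classes of \(\pi_1^*q_i\otimes\pi_2^*p_i^*\otimes\gamma_{i,1}\gamma_{i,2}^{-1}\) and its analogue, so they are genuine integral classes. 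Expanding each in powers of \(h\) gives
\[
(\pm1)\,h^{d_i^2}+\sum_{k<d_i^2}h^k\,\omega_k ,
\]
where each \(\omega_k\) is a class of positive degree pulled back from \(X_i\times X_i\).

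The key observation is a degree count. Inductively, \(X_i\) is a closed, connected, oriented manifold of real dimension \(D_i=2d_i^2\): this holds for \(\operatorname{Gr}(d,2d)\), and it is preserved by products with \(\mathbb{CP}^{j_i}\). So \(\Delta_{d_i}(c(q_i-p_i))\) lies in the top group \(H^{2d_i^2}(X_i;\mathbb Z)\cong\mathbb Z\), and it equals \(m_i[X_i]^*\) with \(m_i\neq0\). It follows that \(\pi_1^*\Delta\cdot\pi_2^*\Delta\) lies in the top degree of \(X_i\times X_i\). Every term containing some \(\omega_k\) of positive degree then multiplies it into degree above \(2D_i\) on the \(X_i\times X_i\) factor, and so vanishes by Künneth: the cohomology of \(X_{i+1}\) is generated by cross products, and \(H^*(\mathbb{CP}^{j_i})\) is free, so Künneth holds integrally without Tor terms. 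What survives is
\[
\Delta_{2d_i}(c(q_{i+1}-p_{i+1}))=(-1)^{d_i^2}\,\pi_1^*\Delta\cdot\pi_2^*\Delta\cdot h^{2d_i^2}.
\]
Since \(j_i=2d_i^2\) and \(c_1(\mathcal O(-1))^{j_i}=\pm[\mathbb{CP}^{j_i}]^*\), this equals \(\pm m_i^2\) times the generator of \(H^{\mathrm{top}}(X_{i+1};\mathbb Z)\cong\mathbb Z\). It is therefore nonzero, and it sits in degree \(2(2d_i)^2=2\cdot2D_i+2j_i\), as required.

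I expect the main care to be needed in making the "top degree kills everything else" step rigorous integrally. This requires three things: that the splitting-space computation descends to \(H^*(X_{i+1};\mathbb Z)\), which holds by injectivity of the pullback to the flag bundle; that the hypothesis \(\Delta\neq0\) really means a nonzero multiple of the fundamental class, so that there is no torsion issue, since top cohomology of a closed oriented manifold is \(\mathbb Z\); and that the cross product of nonzero top classes is nonzero, i.e.\ \(m_i^2\neq0\). I would state the manifold and orientation induction explicitly alongside the lemma, since the argument uses it in an essential way.
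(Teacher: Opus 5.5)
Your proposal is correct. It shares its setup with the paper's proof: the same four-block factorization of $\prod(\xi-\nu)$, the diagonal blocks identified with $\pi_r^*\Delta_{d_i}(c(q_i-p_i))$, and the cross blocks depending on the twists only through $h=a-b$, with leading coefficient $(-1)^{d_i^2}$. The two arguments diverge in how they dispose of the terms $h^k\omega_k$ with $k<2d_i^2$.

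The paper keeps those terms and separates them from the distinguished one by the K\"unneth grading on the $\mathbb{CP}^{j_i}$-factor. It then shows that the component of projective degree $4d_i^2$, namely $\pm\Delta\otimes\Delta\otimes y_i^{2d_i^2}$, is nonzero as a pure tensor of nonzero elements of free abelian groups.

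You instead use the dimension count $\dim X_i=2d_i^2$ (the paper's $R_i=2$). Since $\Delta_{d_i}(c(q_i-p_i))$ is already in top degree, each product $\pi_1^*\Delta\cdot\pi_2^*\Delta\cdot\omega_k$ is pulled back from $H^{>4d_i^2}(X_i\times X_i)=0$. These terms therefore vanish outright, by dimension alone; K\"unneth is not needed for that step.

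Your route buys three things:
\begin{enumerate}
\item An exact formula, $\Delta_{2d_i}(c(q_{i+1}-p_{i+1}))=\pm m_i^2$ times the generator of $H^{8d_i^2}(X_{i+1};\mathbb Z)\cong\mathbb Z$. Since $m_1=\pm1$, induction shows the class is $\pm$ the orientation generator at every stage.
\item Minimal integrality input: you need only that top cohomology of a closed connected oriented manifold is $\mathbb Z$ and that cross products of orientation classes are orientation classes. No torsion-freeness of the full cohomology is required.
\item No splitting-space step: you write the cross blocks as top Chern classes of honest bundles such as $\pi_1^*q_i\otimes\pi_2^*p_i^*\otimes\gamma_{i,1}\otimes\gamma_{i,2}^{-1}$. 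The whole computation then lives in $H^*(X_{i+1};\mathbb Z)$ via the Whitney formula, so injectivity of $\rho^*$ is never needed.
\end{enumerate}

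The paper's route, in turn, does not depend on the coincidence $\dim X_i=2d_i^2$. It needs only $j_i\ge 2d_i^2$ and torsion-free cohomology, so it would still work if the base spaces were enlarged. Your route would break at the next inductive step in that case, because the new class would no longer be in top degree.

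One arithmetic slip: the top degree of $X_{i+1}$ is $2D_i+2j_i=8d_i^2$, not $2\cdot2D_i+2j_i$.
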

\begin{proof}
Recall that
\[
X_{i+1}=X_i\times X_i\times \mathbb{CP}^{j_i},
\qquad
j_i=2d_i^2,
\]
and
\[
p_{i+1}
=
(\pi_1^*p_i\otimes \gamma_{i,1})
\oplus
(\pi_2^*p_i\otimes \gamma_{i,2}),
\]
while
\[
q_{i+1}
=
(\pi_1^*q_i\otimes \gamma_{i,1})
\oplus
(\pi_2^*q_i\otimes \gamma_{i,2}).
\]
Thus, as a virtual bundle,
\[
q_{i+1}-p_{i+1}
=
(\pi_1^*(q_i-p_i)\otimes \gamma_{i,1})
\oplus
(\pi_2^*(q_i-p_i)\otimes \gamma_{i,2}).
\]
Set
\[
y_i=c_1(\gamma_{i,1}\otimes \gamma_{i,2}^{-1})
\in H^2(X_{i+1};\mathbb Z).
\]
By construction,
\[
y_i^{j_i}=y_i^{2d_i^2}\neq 0.
\]
We must prove that the square Thom--Porteous class
\[
\Delta_{2d_i}(c(q_{i+1}-p_{i+1}))
\]
is nonzero.
\medskip

\noindent
{\bf Step 1: Passing to a splitting space.}
We use the splitting principle. There is a space
\[
\rho:\widetilde X_{i+1}\longrightarrow X_{i+1}
\]
such that the pullbacks of the bundles appearing above split into sums of line
bundles, and such that the pullback map
\[
\rho^*:H^*(X_{i+1};\mathbb Z)
\longrightarrow
H^*(\widetilde X_{i+1};\mathbb Z)
\]
is injective. The line-bundle twists \(\gamma_{i,1}\) and \(\gamma_{i,2}\) are already
line bundles pulled back from the \(\mathbb{CP}^{j_i}\)-factor, so no splitting
is required for them. In particular,
\[
\rho^*y_i=\rho^*c_1(\gamma_{i,1}\otimes\gamma_{i,2}^{-1})
\]
is the pullback of the class \(y_i\) whose K\"unneth degree on \(X_{i+1}\) is
entirely in the projective-space factor. (See, for example, Milnor--Stasheff~\cite[Chapter 14]{MilnorStasheffCharacteristicClasses} for the splitting principle in this form.)
Therefore it is enough to show that
\[
\rho^*\Delta_{2d_i}(c(q_{i+1}-p_{i+1}))\neq 0.
\]
Indeed, if a cohomology class becomes nonzero after applying an injective map, then it
was already nonzero before applying that map.
We shall use the splitting space only to compute the relevant Schur polynomial by
Chern roots. The important point is that \(q_i-p_i\) is a virtual bundle. Thus its
Chern-root notation must be understood as a difference of two alphabets, one coming
from \(q_i\) and one coming from \(p_i\).
\medskip

\noindent
{\bf Step 2: Factorization of the square Schur class.}
We now compute the part of
\[
\rho^*\Delta_{2d_i}(c(q_{i+1}-p_{i+1}))
\]
with maximal degree in the class
\[
a-b
=
c_1(\rho^*\gamma_{i,1})-c_1(\rho^*\gamma_{i,2}).
\]
Let
\[
A=(\alpha_1,\ldots,\alpha_{d_i})
\]
be the Chern roots of \(\rho^*\pi_1^*q_i\), and let
\[
B=(\beta_1,\ldots,\beta_{d_i})
\]
be the Chern roots of \(\rho^*\pi_1^*p_i\). Similarly, let
\[
C=(\gamma_1,\ldots,\gamma_{d_i})
\]
be the Chern roots of \(\rho^*\pi_2^*q_i\), and let
\[
D=(\delta_1,\ldots,\delta_{d_i})
\]
be the Chern roots of \(\rho^*\pi_2^*p_i\).
Thus the virtual bundle
\[
\rho^*\pi_1^*(q_i-p_i)
\]
is represented, under the splitting principle, by the difference of alphabets
\[
A-B,
\]
and the virtual bundle
\[
\rho^*\pi_2^*(q_i-p_i)
\]
is represented by the difference of alphabets
\[
C-D.
\]
Write
\[
a=c_1(\rho^*\gamma_{i,1}),
\qquad
b=c_1(\rho^*\gamma_{i,2}).
\]
Tensoring by a line bundle shifts every Chern root by the first Chern class of that
line bundle. Therefore
\[
\rho^*(\pi_1^*q_i\otimes\gamma_{i,1})
\]
has Chern roots
\[
A+a=(\alpha_1+a,\ldots,\alpha_{d_i}+a),
\]
while
\[
\rho^*(\pi_1^*p_i\otimes\gamma_{i,1})
\]
has Chern roots
\[
B+a=(\beta_1+a,\ldots,\beta_{d_i}+a).
\]
Similarly,
\[
\rho^*(\pi_2^*q_i\otimes\gamma_{i,2})
\]
has Chern roots
\[
C+b=(\gamma_1+b,\ldots,\gamma_{d_i}+b),
\]
and
\[
\rho^*(\pi_2^*p_i\otimes\gamma_{i,2})
\]
has Chern roots
\[
D+b=(\delta_1+b,\ldots,\delta_{d_i}+b).
\]
It follows that the virtual bundle
\[
\rho^*(q_{i+1}-p_{i+1})
\]
is represented by the difference of alphabets
\[
\big((A+a)\cup(C+b)\big)
-
\big((B+a)\cup(D+b)\big).
\]
We use the following convention for Schur functions in a difference of alphabets. If
\mbox{\(\lambda=(\lambda_1,\ldots,\lambda_\ell)\)} is a partition and \(F-E\) is a virtual
bundle, then
\[
s_\lambda(F-E)
=
\det\big(c_{\lambda_r+s-r}(F-E)\big)_{1\leq r,s\leq \ell}.
\]
In particular, for the square partition
\[
\lambda = \bigl((2d_i)^{2d_i}\bigr)
=
(
\underbrace{2d_i,\,2d_i,\,\ldots,\,2d_i}_{2d_i\ \mathrm{times}}
).
\]
this determinant is
\[
\det\big(c_{2d_i+s-r}(q_{i+1}-p_{i+1})\big)_{1\leq r,s\leq 2d_i},
\]
which is precisely the Thom--Porteous class
\[
\Delta_{2d_i}(c(q_{i+1}-p_{i+1})).
\]
Thus
\[
\rho^*\Delta_{2d_i}(c(q_{i+1}-p_{i+1}))
=
s_{((2d_i)^{2d_i})}
\big((A+a,C+b)-(B+a,D+b)\big).
\]
This is a supersymmetric Schur function in a difference of alphabets.
Indeed, the Thom--Porteous class is a Schur class of the virtual bundle
\(q_{i+1}-p_{i+1}\), not of an honest bundle alone. After passing to the
splitting space, write the Chern roots of \(q_{i+1}\) as the alphabet
\[
U=(A+a)\cup(C+b),
\]
and the Chern roots of \(p_{i+1}\) as the alphabet
\[
V=(B+a)\cup(D+b).
\]
The total Chern class of the virtual bundle is then
\[
c(q_{i+1}-p_{i+1})
=
\frac{\prod_{\xi \in U}(1+\xi)}{\prod_{\nu \in V}(1+\nu)}.
\]
Consequently the Jacobi--Trudi determinant
\[
\det\big(c_{2d_i+k-j}(q_{i+1}-p_{i+1})\big)_{1\leq j,k\leq 2d_i}
\]
is precisely the supersymmetric Schur function
\[
s_{((2d_i)^{2d_i})}(U-V).
\]
Since both \(U\) and \(V\) have \(2d_i\) elements, the rectangular
resultant identity for Schur functions in a difference of alphabets gives
\[
s_{((2d_i)^{2d_i})}(U-V)
=
\prod_{\xi \in U}\prod_{\nu \in V}(\xi-\nu).
\]
This is the standard Jacobi--Trudi description of supersymmetric Schur functions,
or Schur functions in a difference of alphabets; see Macdonald~\cite[Chapter I, \S3, Example 9]{MacdonaldSymmetricFunctions} or \cite[Lemma~1.2, p.~418]{Pra88}.
Therefore
\[
\begin{aligned}
&s_{((2d_i)^{2d_i})}
\big((A+a,C+b)-(B+a,D+b)\big) \\
&\quad =
\prod_{\xi \in (A+a,C+b)}
\prod_{\nu \in (B+a,D+b)}
(\xi-\nu).
\end{aligned}
\]
Expanding this product according to the four possible pairs of blocks gives
\[
\begin{aligned}
&\prod_{\alpha\in A}\prod_{\beta\in B}
\big((\alpha+a)-(\beta+a)\big) \\
&\quad\cdot
\prod_{\alpha\in A}\prod_{\delta\in D}
\big((\alpha+a)-(\delta+b)\big) \\
&\quad\cdot
\prod_{\gamma\in C}\prod_{\beta\in B}
\big((\gamma+b)-(\beta+a)\big) \\
&\quad\cdot
\prod_{\gamma\in C}\prod_{\delta\in D}
\big((\gamma+b)-(\delta+b)\big).
\end{aligned}
\]
The first and fourth factors do not involve \(a-b\). Indeed,
\[
(\alpha+a)-(\beta+a)=\alpha-\beta
\]
and
\[
(\gamma+b)-(\delta+b)=\gamma-\delta.
\]
Thus
\[
\prod_{\alpha\in A}\prod_{\beta\in B}
\big((\alpha+a)-(\beta+a)\big)
=
\prod_{\alpha\in A}\prod_{\beta\in B}
(\alpha-\beta),
\]
and
\[
\prod_{\gamma\in C}\prod_{\delta\in D}
\big((\gamma+b)-(\delta+b)\big)
=
\prod_{\gamma\in C}\prod_{\delta\in D}
(\gamma-\delta).
\]
By the same rectangular resultant identity, these two products are the smaller
square Schur classes:
\[
\prod_{\alpha\in A}\prod_{\beta\in B}
(\alpha-\beta)
=
s_{(d_i^{d_i})}(A-B),
\]
and
\[
\prod_{\gamma\in C}\prod_{\delta\in D}
(\gamma-\delta)
=
s_{(d_i^{d_i})}(C-D).
\]
Equivalently,
\[
s_{(d_i^{d_i})}(A-B)
=
\rho^*\pi_1^*\Delta_{d_i}(c(q_i-p_i)),
\]
and
\[
s_{(d_i^{d_i})}(C-D)
=
\rho^*\pi_2^*\Delta_{d_i}(c(q_i-p_i)).
\]
It remains to examine the two cross factors, since these are the only factors involving
\(a-b\). Set
\[
u=a-b.
\]
Then
\[
(\alpha+a)-(\delta+b)
=
(\alpha-\delta)+u,
\]
whereas
\[
(\gamma+b)-(\beta+a)
=
(\gamma-\beta)-u.
\]
Hence the product of the two cross factors is
\[
\prod_{\alpha\in A}\prod_{\delta\in D}
\big((\alpha-\delta)+u\big)
\cdot
\prod_{\gamma\in C}\prod_{\beta\in B}
\big((\gamma-\beta)-u\big).
\]
There are \(d_i^2\) factors in the first product and \(d_i^2\) factors in the second
product. Therefore the largest possible total degree in \(u=a-b\) is
\[
2d_i^2.
\]
Moreover, the contribution of degree \(2d_i^2\) is obtained in exactly one way:
from every factor in the first cross product one must select the \(u\)-term, and
from every factor in the second cross product one must select the \(-u\)-term. Any
selection of a term \(\alpha-\delta\) or \(\gamma-\beta\) from even one cross factor
lowers the total \(u\)-degree.
Thus the coefficient of \(u^{2d_i^2}\) in the product of the two cross factors is
\[
(-1)^{d_i^2}.
\]
At this point it is important that the expansion is being organized as a
polynomial in the single class
\[
u=a-b=\rho^*y_i,
\]
and not as a polynomial separately in \(a\) and \(b\).  This distinction matters
because, after the later choice of nontrivial line bundles
\(\xi_{i,1}\) and \(\xi_{i,2}\), both \(a\) and \(b\) may have nonzero
projective-space components.  Nevertheless the cross factors depend on these
line-bundle twists only through their difference \(a-b\):
\[
(\alpha+a)-(\delta+b)=(\alpha-\delta)+u,
\qquad
(\gamma+b)-(\beta+a)=(\gamma-\beta)-u.
\]
The same-block factors contain no \(a\) or \(b\) at all, since
\[
(\alpha+a)-(\beta+a)=\alpha-\beta,
\qquad
(\gamma+b)-(\delta+b)=\gamma-\delta.
\]
Thus the entire expression is a polynomial in \(u\), with coefficients pulled
back from the two \(X_i\)-factors.  The unique term of maximal \(u\)-degree is
obtained by selecting the \(u\)-term from every first cross factor and the
\(-u\)-term from every second cross factor.  All other choices have strictly
smaller \(u\)-degree.

Equivalently, possible monomials involving \(a\) and \(b\) separately do not
represent additional top-degree contributions on the projective-space factor;
they occur only after expanding powers of the single class \(u=a-b\).  Since
\(u=\rho^*y_i\), the distinguished term is the unique contribution with
projective-space degree \(4d_i^2\), while every other term has strictly smaller
projective-space degree.
Combining this with the two non-cross factors computed
above, we obtain
\[
\begin{aligned}
\big[u^{2d_i^2}\big]\,
\rho^*\Delta_{2d_i}(c(q_{i+1}-p_{i+1}))
&=
(-1)^{d_i^2}
s_{(d_i^{d_i})}(A-B)
s_{(d_i^{d_i})}(C-D),
\end{aligned}
\]
where \(\big[u^{2d_i^2}\big]\) denotes the coefficient of \(u^{2d_i^2}\).
Substituting back \(u=a-b\), this says that
\[
\rho^*\Delta_{2d_i}(c(q_{i+1}-p_{i+1}))
\]
contains the distinguished maximal \((a-b)\)-degree term
\[
(-1)^{d_i^2}
s_{(d_i^{d_i})}(A-B)
s_{(d_i^{d_i})}(C-D)
(a-b)^{2d_i^2}.
\]
Equivalently, this distinguished term is
\[
\begin{aligned}
(-1)^{d_i^2}
\rho^*\pi_1^*\Delta_{d_i}(c(q_i-p_i))
\cdot
\rho^*\pi_2^*\Delta_{d_i}(c(q_i-p_i))
\cdot
(a-b)^{2d_i^2}.
\end{aligned}
\]
Finally,
\[
\begin{aligned}
a-b
&=
c_1(\rho^*\gamma_{i,1})-c_1(\rho^*\gamma_{i,2}) \\
&=
c_1(\rho^*(\gamma_{i,1}\otimes \gamma_{i,2}^{-1})) \\
&=
\rho^*c_1(\gamma_{i,1}\otimes \gamma_{i,2}^{-1}) \\
&=
\rho^*y_i.
\end{aligned}
\]
Therefore the distinguished maximal-degree term may be written as
\[
\begin{aligned}
(-1)^{d_i^2}
\rho^*\pi_1^*\Delta_{d_i}(c(q_i-p_i))
\cdot
\rho^*\pi_2^*\Delta_{d_i}(c(q_i-p_i))
\cdot
\rho^*(y_i^{2d_i^2}).
\end{aligned}
\]
The sign $(-1)^{d_i^2}$ will play no role below; the important point is that the coefficient is nonzero. We now record explicitly why this distinguished term cannot be cancelled by any other term in the expansion.
Recall that
\[
u=a-b=\rho^*y_i,
\]
where
\[
y_i=c_1(\gamma_{i,1}\otimes \gamma_{i,2}^{-1})
\]
is pulled back from the \(\mathbb{CP}^{j_i}\)-factor of
\[
X_{i+1}=X_i\times X_i\times \mathbb{CP}^{j_i}.
\]
Since \(j_i=2d_i^2\), the class \(y_i^{2d_i^2}\) is the pullback of the top-degree class
\[
c_1(\xi_{i,1}\otimes \xi_{i,2}^{-1})^{2d_i^2}
\in H^{4d_i^2}(\mathbb{CP}^{2d_i^2};\mathbb Z).
\]
Thus, under the K\"unneth decomposition, \(y_i^{2d_i^2}\) has projective-space degree \(4d_i^2\), the top possible degree on the \(\mathbb{CP}^{j_i}\)-factor.
The K\"unneth identification used here is justified below in Step 3, using the fact that all cohomology groups involved are torsion-free. Under the K\"unneth decomposition
\[
H^*(X_i\times X_i\times \mathbb{CP}^{j_i};\mathbb Z)
\cong
H^*(X_i;\mathbb Z)\otimes H^*(X_i;\mathbb Z)\otimes H^*(\mathbb{CP}^{j_i};\mathbb Z),
\]
the distinguished term
\[
\pi_1^*\Delta_{d_i}(c(q_i-p_i))\,
\pi_2^*\Delta_{d_i}(c(q_i-p_i))\,
y_i^{2d_i^2}
\]
belongs to the summand
\[
H^{2d_i^2}(X_i;\mathbb Z)
\otimes
H^{2d_i^2}(X_i;\mathbb Z)
\otimes
H^{4d_i^2}(\mathbb{CP}^{j_i};\mathbb Z).
\]
Every other term arising from the expansion of the two cross factors contains a strictly smaller power of \(u=a-b\), and therefore, after substituting \(u=\rho^*y_i\), has strictly smaller cohomological degree on the \(\mathbb{CP}^{j_i}\)-factor. Hence every such term lies in a different K\"unneth summand from the distinguished term. It follows that no linear combination of the lower-\(y_i\)-degree terms can cancel the distinguished \(y_i^{2d_i^2}\)-term.
Consequently, to prove nonvanishing of
\[
\Delta_{2d_i}(c(q_{i+1}-p_{i+1})),
\]
it is enough to prove that the distinguished product
\[
\pi_1^*\Delta_{d_i}(c(q_i-p_i))
\cdot
\pi_2^*\Delta_{d_i}(c(q_i-p_i))
\cdot
y_i^{2d_i^2}
\]
is nonzero in \(H^*(X_{i+1};\mathbb Z)\). This is what we verify next.
\medskip

\noindent
{\bf Step 3: Why the distinguished product is nonzero.}
By the induction hypothesis,
\[
\Delta_{d_i}(c(q_i-p_i))\neq 0
\quad\text{in}\quad
H^{2d_i^2}(X_i;\mathbb Z).
\]
We first claim that its pullbacks along the two coordinate projections
\[
\pi_1,\pi_2:X_i\times X_i\times \mathbb{CP}^{j_i}\longrightarrow X_i
\]
remain nonzero.
To see this, fix a basepoint
\[
(x_0,\ell_0)\in X_i\times \mathbb{CP}^{j_i},
\]
and consider the inclusion
\[
\iota_1:X_i\longrightarrow X_i\times X_i\times \mathbb{CP}^{j_i},
\qquad
\iota_1(x)=(x,x_0,\ell_0).
\]
Then
\[
\pi_1\circ \iota_1=\operatorname{id}_{X_i}.
\]
Consequently,
\[
\iota_1^*\pi_1^*
=
(\pi_1\circ \iota_1)^*
=
\operatorname{id}_{H^*(X_i;\mathbb Z)}.
\]
Thus \(\pi_1^*\) is injective on cohomology. In particular,
\[
\pi_1^*\Delta_{d_i}(c(q_i-p_i))\neq 0.
\]
Exactly the same argument, applied to the inclusion
\[
\iota_2:X_i\longrightarrow X_i\times X_i\times \mathbb{CP}^{j_i},
\qquad
\iota_2(x)=(x_0,x,\ell_0),
\]
shows that
\[
\pi_2^*\Delta_{d_i}(c(q_i-p_i))\neq 0.
\]
Next, by construction,
\[
y_i
=
c_1(\gamma_{i,1}\otimes \gamma_{i,2}^{-1})
=
\pi_{\mathbb{CP}^{j_i}}^*
c_1(\xi_{i,1}\otimes \xi_{i,2}^{-1}).
\]
The line bundles \(\xi_{i,1}\) and \(\xi_{i,2}\) were chosen so that
\[
c_1(\xi_{i,1}\otimes \xi_{i,2}^{-1})^{j_i}
\neq 0
\quad\text{in}\quad
H^{2j_i}(\mathbb{CP}^{j_i};\mathbb Z).
\]
Since \(j_i=2d_i^2\), this gives
\[
y_i^{2d_i^2}\neq 0
\quad\text{in}\quad
H^{4d_i^2}(X_{i+1};\mathbb Z).
\]
We now use the K\"unneth formula. The cohomology groups of all spaces appearing in
the construction are torsion-free. Indeed, Grassmannians have free abelian integral
cohomology with Schubert classes as a basis, and complex projective spaces also have
torsion-free integral cohomology; see Milnor--Stasheff~\cite[Chapter 14]{MilnorStasheffCharacteristicClasses} or Fulton~\cite[Chapter 9]{FultonYoungTableaux}. Since the spaces \(X_i\) are obtained from these by finite products, the
K\"unneth theorem gives
\[
H^*(X_i\times X_i\times \mathbb{CP}^{j_i};\mathbb Z)
\cong
H^*(X_i;\mathbb Z)
\otimes
H^*(X_i;\mathbb Z)
\otimes
H^*(\mathbb{CP}^{j_i};\mathbb Z)
\]
(see \cite[Chapter 3]{BottTuDifferentialForms}).  Under this identification, the product
\[
\pi_1^*\Delta_{d_i}(c(q_i-p_i))
\cdot
\pi_2^*\Delta_{d_i}(c(q_i-p_i))
\cdot
y_i^{2d_i^2}
\]
corresponds to the pure tensor
\[
\Delta_{d_i}(c(q_i-p_i))
\otimes
\Delta_{d_i}(c(q_i-p_i))
\otimes
c_1(\xi_{i,1}\otimes \xi_{i,2}^{-1})^{2d_i^2}.
\]
Each tensor factor is nonzero. Since the tensor product is taken over free abelian
groups, this pure tensor is nonzero. Therefore
\[
\pi_1^*\Delta_{d_i}(c(q_i-p_i))
\cdot
\pi_2^*\Delta_{d_i}(c(q_i-p_i))
\cdot
y_i^{2d_i^2}
\neq 0.
\]
Applying the injective map \(\rho^*\), we also have
\[
\rho^*\pi_1^*\Delta_{d_i}(c(q_i-p_i))
\cdot
\rho^*\pi_2^*\Delta_{d_i}(c(q_i-p_i))
\cdot
\rho^*(y_i^{2d_i^2})
\neq 0.
\]
Thus the distinguished maximal-degree term identified in Step 2 is nonzero.
Since this distinguished term occurs with nonzero coefficient and lies in the
maximal possible degree on the \(\mathbb{CP}^{j_i}\)-factor, it cannot be cancelled
by any term involving a lower power of \(y_i\). Hence
\[
\rho^*\Delta_{2d_i}(c(q_{i+1}-p_{i+1}))\neq 0.
\]
Finally, because \(\rho^*\) is injective, we conclude that
\[
\Delta_{2d_i}(c(q_{i+1}-p_{i+1}))\neq 0
\]
in
\[
H^{2(2d_i)^2}(X_{i+1};\mathbb Z).
\]
This proves the lemma.
\end{proof}
Combining this lemma with the initial Grassmannian computation in Subsection~\ref{grassman}, it
follows by induction that
\[
\Delta_{d_i}(c(q_i-p_i))\neq 0
\quad\text{for every } i.
\]
By the Thom--Porteous obstruction, each pair \((p_i,q_i)\) is therefore totally
degeneracy-forcing. Equivalently, every bundle map $p_i\rightarrow q_i$
vanishes somewhere on \(X_i\).

\section{The uniform tracial completion obstruction}

In this section we obstruct uniform property $\Gamma$ for the inductive limit
algebra \(A\) of Section~\ref{nonsimpleconstruction}.  First we note that it is immediate from the definition of uniform property $\Gamma$ that $A$ has uniform property $\Gamma$ if and only if $\mathrm{M}_n(A)$ does for each $n \in \mathbb{N}$.
If a unital separable
nuclear \(C^*\)-algebra with no finite-dimensional representations has uniform
property \(\Gamma\), then its uniform tracial completion, with its designated
traces, has complemented partitions of unity \cite[Theorem 4.6]{CETW:IMRN}.  For type \(\mathrm{II}_1\)
factorial tracially complete \(C^*\)-algebras with complemented partitions of
unity, Evington--Tikuisis prove real rank zero, stable rank one, and the
corresponding Cuntz-semigroup description; in particular, projections in matrix
amplifications are compared by their values on the designated traces
\cite{CETW:IMRN,CCEGSTW,ET:rrsr}. Thus, in order to certify the failure of uniform property
\(\Gamma\) for \(A\), it suffices to construct, for some \(n\), two projections in
\[
        M_n(A^u)\cong (M_n(A))^u
\]
which agree on all normalized matrix-amplified designated traces
\(\tau^{(n)}\), \(\tau\in T(A)\), but are nevertheless not Murray--von Neumann
equivalent in \(M_n(A^u)\).

The projections \(P\) and \(Q\) below form such a pair. Their equality
on traces is forced by the constant-rank equality
\[
        \operatorname{rank}(p_i)=\operatorname{rank}(q_i)
\]
at every finite stage. Their failure to be equivalent is due to the surviving
topological content of the construction: a partial isometry between
them in the uniform tracial completion can be approximated at a finite stage,
where it yields a bundle map \(p_i\to q_i\) which is nonzero on every fiber.
This contradicts the total degeneracy-forcing property established in Section
\ref{schubert}, namely that every bundle map \(p_i\to q_i\) vanishes somewhere.
The only additional input needed is trace-theoretic and automatic for the
non-simple limit constructed in Section~\ref{nonsimpleconstruction}: every
normalized extreme trace at a finite stage extends to a trace on the
inductive limit.

\subsection{Equal trace and trace survival}
Throughout this section, let
\[
A=\lim_{\longrightarrow}(A_i,\phi_i)
\]
be the inductive limit constructed in Section \ref{nonsimpleconstruction}, and import the notation from that section.
The compatible sequences of projections $(p_i)$ and $(q_i)$ define projections
\[
P\in M_k(A^u),\qquad Q\in A^u.
\]
When comparing $P$ and $Q$ relative to $M_k(A^u)$, we regard $Q$ as the diagonal
projection
\[
Q\oplus 0_{k-1}\in M_k(A^u).
\]
For notational simplicity, we continue to denote this matrix-amplified copy by
$Q$ when no confusion can arise.

\begin{lemma}
\label{lem:nonsimple-point-fiber-traces}
Let \(A=\lim (A_i,\phi_i)\) be the inductive limit constructed in
Section~\ref{nonsimpleconstruction}. For every stage \(i\) and every point
\(x\in X_i\), the normalized point-fiber trace on
\[
A_i=q_i(C(X_i)\otimes\mathcal K)q_i
\]
at \(x\) extends to a trace \(\tau_x\in T(A)\). Equivalently, when
\(\tau_x|_{A_i}\) is represented by a probability measure on \(X_i\), that
measure is the point mass \(\delta_x\).
\end{lemma}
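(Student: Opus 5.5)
The plan is to build \(\tau_x\) by following the point \(x\) forward through the connecting maps, using the explicit form of \(\phi_i\). For every \(m\ge i\) and every point \(z\in X_{m+1}=X_m\times X_m\times\mathbb{CP}^{j_m}\), write \(z=(z_1,z_2,\ell)\). Evaluating at \(z\) gives
\[
\mathrm{ev}_z\circ\phi_m(a)
\simeq
\bigl(\mathrm{ev}_{z_1}(a)\otimes \xi_{m,1}(\ell)\bigr)\oplus\bigl(\mathrm{ev}_{z_2}(a)\otimes\xi_{m,2}(\ell)\bigr).
\]
Up to unitary conjugation in the fiber, this is \(\mathrm{ev}_{z_1}(a)\oplus\mathrm{ev}_{z_2}(a)\), since the line-bundle fibers are one-dimensional. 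Hence for the normalized fiber traces \(\mathrm{tr}_z\) on \(A_{m+1}\) (fiber of dimension \(d_{m+1}=2d_m\)) and \(\mathrm{tr}_w\) on \(A_m\) we get
\[
\mathrm{tr}_z\circ\phi_m=\tfrac12\bigl(\mathrm{tr}_{z_1}+\mathrm{tr}_{z_2}\bigr).
\]
In particular, choosing \(z=(x,x,\ell)\) for any \(\ell\) gives \(\mathrm{tr}_{(x,x,\ell)}\circ\phi_m=\mathrm{tr}_x\). This is the key diagonal compatibility.

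Starting from \(x_i:=x\in X_i\), I would define inductively
\[
x_{m+1}=(x_m,x_m,\ell_m)\in X_{m+1},
\]
with arbitrary basepoints \(\ell_m\in\mathbb{CP}^{j_m}\). Then \(\tau_m:=\mathrm{tr}_{x_m}\) is a tracial state on \(A_m\) for each \(m\ge i\), and the identity above gives \(\tau_{m+1}\circ\phi_m=\tau_m\). So \((\tau_m)_{m\ge i}\) is a compatible family of tracial states. I then define \(\tau_x\) on the algebraic limit \(\bigcup_m\phi_{\infty,m}(A_m)\) by
\[
\tau_x(\phi_{\infty,m}(a))=\tau_m(a).
\]

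Three checks are needed for this to extend to \(A\).

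\begin{itemize}
\item \(\tau_x\) is well defined. If \(\phi_{\infty,m}(a)=\phi_{\infty,n}(b)\), then \(\phi_{M,m}(a)\) and \(\phi_{M,n}(b)\) become norm-close at later stages \(M\). Compatibility and norm-contractivity of states then give \(\tau_m(a)=\tau_n(b)\).
\item \(\tau_x\) is contractive. It is a positive, tracial, unital functional on a dense \(*\)-subalgebra, with norm \(1\) since each \(\tau_m\) is a state.
\item \(\tau_x\) extends. By continuity it extends uniquely to a tracial state on \(A\), so \(\tau_x\in T(A)\).
\end{itemize}

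By construction \(\tau_x|_{A_i}=\mathrm{tr}_x\), whose representing measure in the decomposition of traces on the homogeneous algebra \(A_i\) is \(\delta_x\).

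The only step requiring care is the fiber computation. It must be checked that twisting by the line bundles \(\gamma_{m,r}\) does not change the normalized fiber trace. The pointwise twist by a one-dimensional fiber is unitarily implemented, and traces are unitarily invariant, so the identity holds.

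No simplicity or density of sparse evaluations is needed, because the non-simple connecting maps contain the diagonal \((x,x)\) and \(x\) can simply be copied into both factors forever.
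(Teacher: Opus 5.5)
Your proposal is correct and follows the paper's proof essentially verbatim. You push $x$ forward along the diagonal points $x_{r+1}=(x_r,x_r,\ell_r)$, note that twisting by one-dimensional line fibers leaves the normalized fiber trace unchanged so that $\mathrm{tr}_{x_{r+1}}\circ\phi_r=\mathrm{tr}_{x_r}$, and take the induced trace on the limit. Your extra checks (well-definedness and continuous extension to $A$, and the general identity $\mathrm{tr}_z\circ\phi_m=\tfrac12(\mathrm{tr}_{z_1}+\mathrm{tr}_{z_2})$) make explicit what the paper leaves implicit.
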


\begin{proof}
Fix \(x_i=x\in X_i\). Choose arbitrary points
\[
\ell_r\in \mathbb{CP}^{j_r},
\qquad r\geq i,
\]
and define points \(x_r\in X_r\), for \(r\geq i\), recursively by
\[
x_{r+1}=(x_r,x_r,\ell_r)
\in X_r\times X_r\times \mathbb{CP}^{j_r}=X_{r+1}.
\]
Let \(\operatorname{tr}_{x_r}\) denote the normalized fiber trace on \(A_r\) at
\(x_r\). If \(a\in A_r\), then
\[
\phi_r(a)=
(\pi_1^*a\otimes \gamma_{r,1})
\oplus
(\pi_2^*a\otimes \gamma_{r,2}).
\]
Evaluating at \(x_{r+1}=(x_r,x_r,\ell_r)\) gives
\[
\phi_r(a)(x_{r+1})
=
\bigl(a(x_r)\otimes \gamma_{r,1}(\ell_r)\bigr)
\oplus
\bigl(a(x_r)\otimes \gamma_{r,2}(\ell_r)\bigr).
\]
The fibers \(\gamma_{r,1}(\ell_r)\) and \(\gamma_{r,2}(\ell_r)\) are
one-dimensional, and the two summands have the same rank. Hence tensoring by
these line fibers does not change the normalized trace, and
\[
\operatorname{tr}_{x_{r+1}}(\phi_r(a))
=
\operatorname{tr}_{x_r}(a).
\]
Thus the point-fiber traces \((\operatorname{tr}_{x_r})_{r\geq i}\) are compatible
with the connecting maps. They define a trace \(\tau_x\in T(A)\) whose restriction
to \(A_i\) is the normalized fiber trace at \(x\). The representing measure of
this restriction is therefore \(\delta_x\).
\end{proof}

It is immediate that
\[
\tau_i^{(k)}(p_i)=\tau_i^{(k)}(q_i\oplus 0_{k-1})
\]
since $p_i$ and $q_i$ have equal rank.  Passage to the inductive limit gives \begin{equation}\label{equaltrace}
\tau^{(k)}(P)=\tau^{(k)}(Q), \ \forall \tau \in T(A).
\end{equation}
\subsection{Non-equivalence in the uniform tracial completion}
We now prove the main obstruction result. This is the argument which ultimately
uses the Thom--Porteous classes; the trace-theoretic input is precisely
Lemma~\ref{lem:nonsimple-point-fiber-traces}.

\begin{theorem}\label{thm:nonsimple-nonequivalence}
Let \(A=\lim (A_i,\phi_i)\) be the non-simple inductive limit
constructed in Section~\ref{nonsimpleconstruction}. Then $P\not\sim Q$
relative to $M_k(A^u)$,
where $Q$ is regarded as $Q\oplus 0_{k-1}$.
\end{theorem}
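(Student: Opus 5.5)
We argue by contradiction. Suppose \(V\in M_k(A^u)\) is a partial isometry with \(V^*V=P\) and \(VV^*=Q\). The plan has three steps. First, approximate \(V\) at a finite stage. Second, extract from that approximation a bundle map \(p_i\to q_i\) whose fibers are uniformly large. Third, contradict the total degeneracy-forcing property of \((p_i,q_i)\) established in Section~\ref{schubert}.

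\textbf{Step 1: finite-stage approximation.} By definition of \(A^u\), \(V\) is represented by an operator-norm-bounded \(\|\cdot\|_{2,u}\)-Cauchy sequence from \(M_k(A)\). Since \(\bigcup_i\phi_{\infty,i}(M_k(A_i))\) is norm dense, we may pick, for any \(\varepsilon>0\), an index \(i\) and \(v\in M_k(A_i)\) with \(\|v\|\le 1\). Replacing \(v\) by \(q_ivp_i\) (with \(q_i\) read as \(q_i\oplus 0_{k-1}\)) costs only a bounded factor in \(\|\cdot\|_{2,u}\), because \(V=QVP\). We then arrange
\[
\|v^*v-p_i\|_{2,u}<\varepsilon,\qquad \|vv^*-q_i\|_{2,u}<\varepsilon,
\]
where these seminorms are taken relative to the traces \(\tau\circ\phi^{(k)}_{\infty,i}\), \(\tau\in T(A)\). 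Now \(v=q_ivp_i\) is a section of \(\operatorname{Hom}(p_i,q_i)\), that is, a continuous bundle map \(T:p_i\to q_i\) over \(X_i\).

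\textbf{Step 2: trace survival.} Here Lemma~\ref{lem:nonsimple-point-fiber-traces} is the key input. For each \(x\in X_i\), the trace \(\tau_x\in T(A)\) restricts on \(A_i\) to the normalized fiber trace at \(x\). The uniform estimate therefore gives, for every \(x\),
\[
\operatorname{tr}_x\big((T_x^*T_x-\mathrm{id}_{p_i(x)})^2\big)<\varepsilon^2 .
\]
Here \(\operatorname{tr}_x\) is normalized by the unit rank \(d_i\), and for the \(M_k\)-amplification it is normalized by \(kd_i\); this only changes constants by the factor \(k\). The estimate is pointwise and uniform in \(x\) precisely because \(\|\cdot\|_{2,u}\) takes a supremum over all traces, including every point mass. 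Choose \(\varepsilon\) small relative to \(1/(kd_i)\). Since \(T_x^*T_x\) is a positive operator on a \(d_i\)-dimensional fiber, normalized trace error below \(1/(kd_i)\) forces \(T_x^*T_x\) to be nonzero; indeed \(T_x=0\) would give normalized value \(\operatorname{tr}_x(p_i(x))=1/k\) in the amplified normalization.

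The main obstacle is that \(d_i\) depends on \(i\), and \(i\) depends on \(\varepsilon\). To avoid this circularity, I would not demand invertibility of \(T_x\), only that \(T_x\neq 0\). A vanishing fiber gives the fixed defect \(\tau_x^{(k)}(p_i)=d_i/(kd_i)=1/k\), independent of \(i\). So it suffices to take \(\varepsilon^2<1/k\) once and for all. Concretely, I would use
\[
\tau_x^{(k)}(|v^*v-p_i|)\ge \tau_x^{(k)}(p_i)=1/k \quad\text{whenever } T_x=0,
\]
together with Cauchy--Schwarz to pass between the \(2\)-norm and the trace. This yields \(T_x\neq0\) for every \(x\in X_i\).

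\textbf{Step 3: contradiction.} We now have a bundle map \(p_i\to q_i\) that vanishes nowhere on \(X_i\). This contradicts the fact, proved by induction from the Lemma in Section~\ref{schubert} together with the Grassmannian computation in Subsection~\ref{grassman}, that \(\Delta_{d_i}(c(q_i-p_i))\neq0\) and hence every bundle map \(p_i\to q_i\) vanishes somewhere. Therefore \(P\not\sim Q\) in \(M_k(A^u)\).

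Two minor points need checking along the way. Compressing by \(p_i,q_i\) preserves the \(2\)-norm estimates up to constants, since these are projections in \(M_k(A_i)\) whose images converge to \(P\) and \(Q\). Also, \(\phi_{\infty,i}\) may fail to be injective; this is harmless because only the traces \(\tau_x\) are used.
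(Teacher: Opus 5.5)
Your proposal is correct and follows essentially the same route as the paper. You compress a finite-stage contraction to $q_ivp_i$, use the point-fiber traces $\tau_x$ of Lemma~\ref{lem:nonsimple-point-fiber-traces} to turn the uniform $2$-norm estimate into a fiberwise one at every $x\in X_i$, and contradict total degeneracy-forcing. The only difference is that you test the case $T_x=0$ directly, where $\tau_x^{(k)}((p_i-v^*v)^2)=\tau_x^{(k)}(p_i)=1/k$. The paper instead derives the stronger bound $\operatorname{rank}T_x>(1-k\varepsilon^2)d_i$, using that $p_i(x)-T_x^*T_x$ is the identity on $\ker T_x$.

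This direct computation is what makes your threshold $\varepsilon^2<1/k$ suffice. The Cauchy--Schwarz detour through $\tau_x^{(k)}(|v^*v-p_i|)$ is unnecessary, and as written it would need the stronger $\varepsilon<1/k$; either threshold is fixed independently of $i$, so nothing breaks.
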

\begin{proof}
Suppose, toward a contradiction, that
$P\sim Q$
inside $M_k(A^u)$. This may require increasing $k$ in general, but this is inconsequential as it only has to be done once.  Then there exists a partial isometry
$V\in M_k(A^u)$ such that
\[
V^*V=P,\qquad VV^*=Q.
\]
We shall approximate \(V\) by finite-stage contractions.  By the definition
of the uniform tracial completion, the unit ball of \(M_k(A)\) is uniformly
\(2\)-norm dense in the unit ball of \(M_k(A^u)\).  After identifying each
\(M_k(A_i)\) with its image in \(M_k(A)\), the unit ball of the algebraic
inductive-limit union
\[
   \bigcup_i M_k(A_i)
\]
is norm dense in the unit ball of \(M_k(A)\).  Hence there are contractions
\[
   x_m\in M_k(A_{i(m)})
\]
such that
\[
   \|x_m-V\|_{2,u}\longrightarrow 0.
\]
At that stage, $P$ and $Q$ are represented by
\[
p_{i(m)}\in M_k(A_{i(m)}),
\qquad
q_{i(m)}\oplus 0_{k-1}\in M_k(A_{i(m)}).
\]
For notational simplicity, we write $q_{i(m)}$ for this matrix-amplified copy
when forming corners in $M_k(A_{i(m)})$.
Set
\[
y_m=q_{i(m)}x_m p_{i(m)}
\]
so that $y_m:p_{i(m)}\rightarrow q_{i(m)}$ is a bundle map.
Since $QVP=V$, we have
\[
\|y_m-V\|_{2,u}
=
\|q_{i(m)}x_m p_{i(m)}-QVP\|_{2,u}
\leq
\|x_m-V\|_{2,u}
\longrightarrow 0.
\]
Hence
\[
y_m^*y_m\longrightarrow V^*V=P
\]
in uniform $2$-norm. Indeed, since $y_m$ and $V$ are contractions,
\[
\|y_m^*y_m-V^*V\|_{2,u}
\leq
\|y_m^*(y_m-V)\|_{2,u}
+
\|(y_m^*-V^*)V\|_{2,u}
\leq
2\|y_m-V\|_{2,u}
\longrightarrow 0.
\]
Choose $\varepsilon>0$ so small that $k\varepsilon^2<1$.
For $m$ sufficiently large, write
\[
i=i(m),\qquad y=y_m,\qquad p=p_i,\qquad q=q_i.
\]
Then
\begin{equation}\label{largemap}
\|y^*y-p\|_{2,u}<\varepsilon.
\end{equation}
Since $y=qyp$
and $y$ is a contraction, we have
\[
0\leq y(x)^*y(x)\leq p(x), \ \forall x \in X_i.
\]
We now convert the uniform $2$-norm estimate (\ref{largemap}) into a pointwise fiber estimate.
Fix
$x\in X_i$.
By Lemma~\ref{lem:nonsimple-point-fiber-traces}, the normalized point-fiber trace
at \(x\) extends to a trace
$\tau_x\in T(A)$.
It then follows from (\ref{largemap}) that
\[
\operatorname{tr}_x
\left(
(p(x)-y(x)^*y(x))^2
\right)
<
\varepsilon^2
\]
for every \(x\in X_i\).  Set
\[
r(x)=\operatorname{rank}(y(x)).
\]
Since
\[
\operatorname{rank}(p(x))=d_i,
\]
we have
\[
\dim\ker y(x)=d_i-r(x).
\]
The positive operator
\[
p(x)-y(x)^*y(x)
\]
acts as the identity on
\[
\ker y(x)\subseteq p(x)\mathbb C^M.
\]
Therefore
\[
\operatorname{Tr}
\left(
(p(x)-y(x)^*y(x))^2
\right)
\geq d_i-r(x).
\]
The fiber unit of $M_k(A_i)$ has rank $kd_i$, so after normalized trace we obtain
\[
\operatorname{tr}_x
\left(
(p(x)-y(x)^*y(x))^2
\right)
\geq
\frac{d_i-r(x)}{kd_i}.
\]
Combining this with the previous estimate gives
\[
\frac{d_i-r(x)}{kd_i}
<
\varepsilon^2.
\]
It follows that
\[r(x)>
\left(
1-k\varepsilon^2
\right)d_i.
\]
Since $k\varepsilon^2<1$, we have $r(x)>0$ and so
\[
y(x)\neq 0, \ \forall x \in X_i.
\]
In other words, the bundle map $y:p_i\rightarrow q_i$
is nowhere zero. This contradicts the total degeneracy-forcing property established in
Section~\ref{schubert}. Therefore $P\not\sim Q$
inside $M_k(A^u)$.
\end{proof}

\subsection{Failure of uniform property \texorpdfstring{$\Gamma$}{Gamma}}
We now give the general conclusion.
\begin{theorem}\label{thm:nonsimple-no-uniform-gamma}
Let \(A=\lim (A_i,\phi_i)\) be the non-simple inductive limit
constructed in Section~\ref{nonsimpleconstruction}. Then \(A\) does not have
uniform property \(\Gamma\).
\end{theorem}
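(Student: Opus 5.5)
The argument is by contradiction: assume \(A\) has uniform property \(\Gamma\) and use the comparison consequences recalled at the start of this section to force \(P\sim Q\), contradicting Theorem~\ref{thm:nonsimple-nonequivalence}.

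First, since uniform property \(\Gamma\) passes to matrix amplifications, \(M_k(A)\) also has uniform property \(\Gamma\). The algebra \(M_k(A)\) is unital, separable and nuclear, because \(A\) is an inductive limit of homogeneous algebras. It has no finite-dimensional representations, because \(\operatorname{rank}(q_i)=d_i\to\infty\): a finite-dimensional representation of \(A\) would factor through some \(A_i\) compatibly with all later stages, but the irreducible representations of \(A_r\) have dimension \(d_r\to\infty\), and the connecting maps are injective on each fiber. Therefore \cite[Theorem~4.6]{CETW:IMRN} applies, and \((M_k(A))^u\cong M_k(A^u)\) with its designated traces has complemented partitions of unity.

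Next I would check the remaining hypotheses for the Evington--Tikuisis comparison theorem: the completion should be type \(\mathrm{II}_1\) and factorial in the tracially complete sense. Type \(\mathrm{II}_1\) should follow from the absence of finite-dimensional representations, since no designated trace factors through a matrix algebra. If factoriality of \(T(A)\) in the CCEGSTW sense is not automatic for the non-simple limit, I would replace it with the version of the comparison statement that holds for arbitrary tracially complete algebras with CPoU. That version still gives strict comparison of projections by designated traces. With CPoU one reduces to each extreme trace, and there \(\pi_\tau(A)''\) is a \(\mathrm{II}_1\) factor, where equal traces give equivalence. The equivalences are then glued using CPoU.

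Granting comparison, \eqref{equaltrace} gives \(\tau^{(k)}(P)=\tau^{(k)}(Q\oplus 0_{k-1})\) for every \(\tau\in T(A)\). Hence \(P\sim Q\oplus0_{k-1}\) in \(M_k(A^u)\), which contradicts Theorem~\ref{thm:nonsimple-nonequivalence}. So \(A\) does not have uniform property \(\Gamma\).

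The hard step is the middle one: confirming that the comparison-of-projections conclusion of \cite{ET:rrsr} applies to \(A^u\) even though \(A\) is not simple and \(T(A)\) need not be a Bauer simplex. I would first check whether the cited results require only CPoU plus type \(\mathrm{II}_1\). If they do, the proof is immediate. If not, I would carry out the local-to-global gluing sketched above directly, using CPoU to patch together the factorwise partial isometries.
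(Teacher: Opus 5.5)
Your proposal is correct and is essentially the paper's argument. You assume uniform $\Gamma$, obtain CPoU for $M_k(A^u)\cong(M_k(A))^u$ from \cite[Theorem~4.6]{CETW:IMRN}, invoke comparison of projections by designated traces, and combine the equal-trace identity with Theorem~\ref{thm:nonsimple-nonequivalence}. The step you flag as hard is handled in the paper purely by citation, and the gluing fallback is unnecessary: the completion is taken with respect to all of $T(A)$, which is trivially a closed face of itself, so $(A^u,T(A))$ is factorial with no Bauer-simplex hypothesis, and type $\mathrm{II}_1$ follows from the absence of finite-dimensional representations.
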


\begin{proof}
Suppose, toward a contradiction, that \(A\) has uniform property \(\Gamma\) and let
$A^u$ be the uniform tracial completion of \(A\) with respect to \(T(A)\). The algebra
\(A\) is the unital separable nuclear stably finite inductive limit constructed
in Section~\ref{nonsimpleconstruction}, and it has no finite-dimensional
representations by construction. It follows that
projections in matrix amplifications of \(A^u\) are compared by their values on
the designated traces coming from \(T(A)\) \cite{CETW:IMRN, CCEGSTW}.
We have
\[
\tau^{(k)}(P)=\tau^{(k)}(Q), \ \forall \tau\in T(A),
\]
by construction, so comparison by
designated traces gives $P\sim Q$
inside $M_k(A^u)$. This contradicts Theorem~\ref{thm:nonsimple-nonequivalence}, so $A$ does not have uniform property $\Gamma$.
\end{proof}

\section{A simple example}\label{sec:simple-example}

We now pass from the non-simple inductive limit of Section~\ref{nonsimpleconstruction}
to a simple AH algebra.  The point-evaluation part of the construction is the
standard Villadsen simplicity device: one inserts point-evaluation summands
according to a countable schedule so that every nonzero positive element is made
full at some later stage, while the normalized rank of the inserted summands is
kept summable by telescoping the topological part of the system
\cite{Villadsen:JFA,Villadsen:JAMS}.  We record the precise form of this device
needed here, together with the short survival estimates for the old
Thom--Porteous branch.

Fix once and for all a number \(0<\eta<1\).  We freely replace the original
system \((A_i,\phi_i)\) by a telescoping, relabelled with the same notation.  The
forced-degeneracy conclusion of Section~\ref{schubert} is unaffected by this
operation.

\begin{proposition}\label{prop:sparse-simplefication-survival}
There is an inductive system
\[
   B=\lim_{\longrightarrow}(B_i,\psi_i),
   \qquad
   B_i=s_i(C(X_i)\otimes\mathcal K)s_i,
\]
with \(q_i\leq s_i\), satisfying the following properties.

\begin{enumerate}
\item The limit \(B\) is simple, separable, unital, nuclear, and non-elementary.

\item Each connecting map decomposes as
\[
   \psi_i=\psi_i^{\mathrm{top}}\oplus\psi_i^{\mathrm{pt}},
\]
where \(\psi_i^{\mathrm{pt}}\) is a finite direct sum of point evaluations and
\(\psi_i^{\mathrm{top}}\) is the old diagonal topological map, extended from the
\(q_i\)-corner to the larger \(s_i\)-corner.  In particular,
\[
   \psi_i^{\mathrm{top}}|_{A_i}=\phi_i:A_i\longrightarrow A_{i+1}.
\]
If
\[
   \alpha_i=
   \frac{\operatorname{rank}(s_{i+1}^{\mathrm{top}})}
        {\operatorname{rank}(s_{i+1})},
\]
then
\[
   \delta:=\prod_{i=1}^{\infty}\alpha_i>1-\eta.
\]

\item For every \(m\) and every \(x\in X_m\), there is a trace
\(\tau_x\in T(B)\) whose restriction to \(B_m\) is represented by a probability
measure \(\mu_x\) on \(X_m\) satisfying
\[
   \mu_x(\{x\})\geq \delta.
\]

\item For every \(m\),
\[
   \frac{\operatorname{rank}(q_m)}{\operatorname{rank}(s_m)}\geq \delta.
\]
Consequently, if \(\operatorname{tr}_{k,s_m}\) denotes the normalized fiber trace
on \(M_k(B_m)\), then
\[
   \operatorname{tr}_{k,s_m}(p_m(x))
   =
   \frac{\operatorname{rank}(p_m)}{k\operatorname{rank}(s_m)}
   \geq \frac{\delta}{k},
   \qquad x\in X_m.
\]

\item Let \(\kappa_m^B:B_m\to B^u\) denote the canonical map into the uniform
tracial completion, and use the same notation for its matrix amplifications.  Put
\[
   \widehat q_m=q_m\oplus 0_{k-1}\in M_k(B_m).
\]
Then the sequences
\[
   \bigl(\kappa_m^B(p_m)\bigr)_m,
   \qquad
   \bigl(\kappa_m^B(\widehat q_m)\bigr)_m
\]
are Cauchy in the uniform \(2\)-norm of \(M_k(B^u)\).  Let
\[
   P_B=\lim_{m\to\infty}\kappa_m^B(p_m),
   \qquad
   Q_B=\lim_{m\to\infty}\kappa_m^B(\widehat q_m).
\]
Then \(P_B\) and \(Q_B\) are projections in \(M_k(B^u)\), and
\[
   \tau^{(k)}(P_B)=\tau^{(k)}(Q_B),
   \qquad \tau\in T(B).
\]
\end{enumerate}
\end{proposition}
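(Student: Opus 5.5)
We construct the system by a standard Villadsen-type scheme, running the topological and point-evaluation parts side by side.

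\textbf{Choosing the system.} Enumerate a countable dense set of points $\{z_{i,n}\}_n\subset X_i$ for each $i$, and fix a bookkeeping bijection $\mathbb N\to\{(i,n)\}$ so that each pair $(i,n)$ is handled at some stage $\ell\ge i$. Choose numbers $\alpha_i\in(0,1)$ with $\prod_i\alpha_i>1-\eta$. Telescope the original system $(A_i,\phi_i)$ so that the multiplicity of $\phi_{i}$ (which is $2$ per step, i.e.\ $2^{\ell}$ after $\ell$ steps) is large enough for a single extra point-evaluation summand to have relative rank at most $1-\alpha_i$. Set $s_1=q_1$. Recursively, let $s_{i+1}$ be the direct sum of $s_{i+1}^{\mathrm{top}}$, the image of $s_i$ under the diagonal formula defining $\phi_i$, and a trivial projection of rank $\operatorname{rank}(s_i)$ carrying one point evaluation. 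Define
\[
\psi_i(a)=\bigl((\pi_1^*a\otimes\gamma_{i,1})\oplus(\pi_2^*a\otimes\gamma_{i,2})\bigr)\oplus a(w_i)\otimes 1,
\]
where $w_i$ is the point pulled back to $X_i$ from the scheduled $z_{i',n}$ along the coordinate projections. Then $\psi_i^{\mathrm{top}}$ restricted to the $q_i$-corner is $\phi_i$, since $q_i\le s_i$ and the formula is the same, which gives (2). The ratio $\operatorname{rank}(s^{\mathrm{top}}_{i+1})/\operatorname{rank}(s_{i+1})=2/3$ is too large a loss, so the point part must instead be inserted only after telescoping $N_i$ topological steps, giving $\alpha_i=2^{N_i}/(2^{N_i}+1)$. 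Choosing $N_i$ large makes the product exceed $1-\eta$.

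\textbf{Simplicity.} Simplicity follows from the usual argument. Every point of $X_\ell$, for $\ell>i$, maps under the composite coordinate projections onto scheduled points, which are dense. So the image of any nonzero positive $a\in B_i$ is eventually nonzero at every point of $X_\ell$, hence full. Nuclearity, separability and unitality are automatic. The algebra is non-elementary because the ranks tend to infinity.

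\textbf{Items (3) and (4).} Item (4) follows by induction: the $q$-part of $s_{m}$ receives only the topological (rank-preserving ratio) images, and point summands inflate $s$ by the factors $\alpha_i^{-1}$, so $\operatorname{rank}(q_m)/\operatorname{rank}(s_m)\ge\prod_{i<m}\alpha_i\ge\delta$. For (3), copy Lemma~\ref{lem:nonsimple-point-fiber-traces}. Choose $x_{r+1}=(x_r,x_r,\ell_r)$ and take a weak$^*$ limit of the normalized fiber traces $\operatorname{tr}_{x_r}\circ\psi_{r-1,m}$ on $B_m$. Restricted to $B_m$, each is a convex combination in which the topological branches all evaluate at $x$, with weight at least $\prod\alpha_i$, and the rest are point masses elsewhere. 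The limit measure therefore has $\mu_x(\{x\})\ge\delta$. These functionals are tracial states on $B_m$ that are compatible along the system up to a shrinking error, so the weak$^*$ limit defines a trace $\tau_x$ on $B$.

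\textbf{Item (5): the main obstacle.} The main obstacle is (5). The point summands do not map $p_m$ to $p_{m+1}$; rather, $\psi_m(p_m)=p_{m+1}\oplus p_m(w_m)\otimes 1$. Hence $\|\kappa_{m+1}^B(\psi_m(p_m))-\kappa^B_{m+1}(p_{m+1})\|_{2,u}^2$ is at most the normalized rank of the point part, which is at most $1-\alpha_m$ up to the factor $1/k$. The same holds for $\widehat q_m$. Since $\sum_m(1-\alpha_m)<\infty$ (because $\prod\alpha_m>0$), we need $\sum\sqrt{1-\alpha_m}<\infty$ for the sequences to be Cauchy, so we choose the $N_i$ so that this sum also converges. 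The limits are then projections: a uniform $2$-norm limit of projections that is bounded in operator norm is a projection in $B^u$, since multiplication is $2$-norm continuous on bounded sets. Finally, for every $\tau$ and every $m$ we have $\tau^{(k)}(p_m)=\tau^{(k)}(\widehat q_m)$, because $p_m$ and $q_m$ have equal rank at every fiber of $X_m$. Passing to the limit, using continuity of $\tau$ in the $2$-norm, gives $\tau^{(k)}(P_B)=\tau^{(k)}(Q_B)$.
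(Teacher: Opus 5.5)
Your proposal is correct and is essentially the paper's argument. It uses telescoped diagonal maps plus sparse point evaluations: one evaluation per stage at lifts of a dense sequence instead of a basis of open sets. Simplicity really comes from the constant summand $\psi_{\ell,i}(a)(w_\ell)\otimes 1$ being nonzero, not from points of $X_\ell$ mapping onto scheduled points. As in the paper, you get (3) from diagonal-branch traces, (4) from the rank-ratio recursion, and (5) from a $2$-norm bound on the off-branch part.

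The only real variation is in (5). You sum consecutive differences, so you must arrange $\sum_m\sqrt{1-\alpha_m}<\infty$. The paper instead bounds $\psi^{(k)}_{n,m}(p_m)-p_n$ in one step by the single projection $1-E_{n,m}$, whose trace is at most $\sum_{r=m}^{n-1}(1-\alpha_r)$, so $\sum_r(1-\alpha_r)<\infty$ already suffices.
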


\begin{proof}
We first recall the standard point-evaluation construction.  Enumerate the pairs
\((m,U)\), where \(m\geq 1\) and \(U\) ranges over a countable basis of nonempty
open subsets of \(X_m\).  At the stages prescribed by this enumeration, add a
finite number of point-evaluation summands chosen so that, for each pair
\((m,U)\), some later composite \(B_m\to B_j\) contains a direct summand
\[
   b\longmapsto b(x),
   \qquad x\in U.
\]
This is the standard scheduling argument used in Villadsen's AH constructions
\cite{Villadsen:JFA,Villadsen:JAMS}. Since the old topological maps may be
telescoped to have arbitrarily large multiplicity, the point-evaluation summands
can be inserted with normalized rank smaller than any prescribed summable
sequence.  Indeed, choose a summable sequence
\((\varepsilon_i)\) with \(0<\varepsilon_i<1\) and
\[
\prod_i(1-\varepsilon_i)>1-\eta .
\]
At the \(i\)-th step of the schedule, suppose that \(t_i\) point-evaluation
summands are required.  Before inserting them, telescope the old topological
system far enough that the topological part from the current stage to the next
has multiplicity \(M_i\) with
\[
\frac{t_i}{M_i+t_i}<\varepsilon_i .
\]
After relabelling the terminal algebra as \(B_{i+1}\), define
\(\psi_i^{\mathrm{top}}\) to be this telescoped topological diagonal map applied
to the whole \(s_i\)-corner, and define \(\psi_i^{\mathrm{pt}}\) to be the direct
sum of the required point evaluations, placed in orthogonal summands of the new
unit.  The restriction of \(\psi_i^{\mathrm{top}}\) to the old \(q_i\)-corner is
exactly the corresponding old Thom--Porteous connecting map.  Moreover,
\[
\alpha_i
=
\frac{\operatorname{rank}(s_{i+1}^{\mathrm{top}})}
     {\operatorname{rank}(s_{i+1})}
=
\frac{M_i}{M_i+t_i}
>
1-\varepsilon_i .
\]
Consequently
\[
\prod_i\alpha_i
\geq
\prod_i(1-\varepsilon_i)
>
1-\eta .
\]
Thus, we may arrange
\[
   \sum_i(1-\alpha_i)<\infty,
   \qquad
   \prod_i\alpha_i>1-\eta.
\]
The scheduling condition implies simplicity: if \(0\neq b\in (B_m)_+\), then the
open set \(\{x\in X_m:b(x)\neq 0\}\) contains some basic open set \(U\); at a
later stage the image of \(b\) has a nonzero constant point-evaluation summand
and is therefore full in the corresponding homogeneous block.  Hence every
nonzero ideal in the limit contains the unit.  The limit is non-elementary
because the first building block has infinite spectrum and the connecting maps
are injective.  The remaining assertions in (1) and (2) are immediate from the
construction.

We next prove point-mass survival.  Fix \(m\) and \(x\in X_m\).  Follow the
diagonal topological branch beginning at \(x\): choose points \(z_r\in X_r\),
\(r\geq m\), with \(z_m=x\), so that every topological eigenvalue map from stage
\(r+1\) to stage \(r\) sends \(z_{r+1}\) to \(z_r\).  For \(s>m\), the normalized
fiber trace on \(B_s\) at \(z_s\), restricted to \(B_m\), is represented by a
probability measure \(\mu_{m,s}\) on \(X_m\).  All branches which follow only
topological summands from stages \(m\) through \(s-1\) send \(z_s\) to \(x\), and
their total normalized rank proportion is
\[
   \prod_{r=m}^{s-1}\alpha_r.
\]
Therefore
\[
   \mu_{m,s}(\{x\})
   \geq
   \prod_{r=m}^{s-1}\alpha_r
   \geq
   \prod_{r=1}^{\infty}\alpha_r
   =\delta.
\]
Passing to a subnet of these finite-stage traces gives a trace \(\tau_x\in T(B)\).
If \(\mu_x\) is the measure representing \(\tau_x|_{B_m}\), then the Portmanteau
theorem applied to the closed set \(\{x\}\) gives
\[
   \mu_x(\{x\})\geq \delta.
\]
This proves (3).

For the rank estimate, note that \(s_1=q_1\).  Passing from stage \(m\) to stage
\(m+1\), the old projection \(q_m\) follows only the topological part, while the
unit \(s_m\) follows both the topological and point-evaluation parts.  Hence
\[
   \frac{\operatorname{rank}(q_{m+1})}{\operatorname{rank}(s_{m+1})}
   =
   \alpha_m
   \frac{\operatorname{rank}(q_m)}{\operatorname{rank}(s_m)}.
\]
Iteration gives
\[
   \frac{\operatorname{rank}(q_m)}{\operatorname{rank}(s_m)}
   =
   \prod_{r=1}^{m-1}\alpha_r
   \geq
   \prod_{r=1}^{\infty}\alpha_r
   =\delta.
\]
Since \(\operatorname{rank}(p_m)=\operatorname{rank}(q_m)\), the displayed
fiber-trace estimate follows.  This proves (4).

It remains to prove (5).  For \(m<n\), let \(e_{n,m}\in B_n\) be the projection
onto the direct summands of the composite \(\psi_{n,m}:B_m\to B_n\) which follow
only topological summands from stages \(m\) through \(n-1\).  Put
\[
   E_{n,m}=1_k\otimes e_{n,m}\in M_k(B_n).
\]
On this summand the restriction to the old \(q_m\)-corner agrees exactly with
the old Thom--Porteous connecting map.  Therefore
\[
   E_{n,m}\psi_{n,m}^{(k)}(p_m)E_{n,m}=p_n,
   \qquad
   E_{n,m}\psi_{n,m}^{(k)}(\widehat q_m)E_{n,m}=\widehat q_n.
\]
The complementary normalized rank is unchanged by matrix amplification: for
every \(\sigma\in T(B_n)\),
\[
   \sigma^{(k)}(1-E_{n,m})
   =
   1-\prod_{r=m}^{n-1}\alpha_r
   \leq
   \sum_{r=m}^{n-1}(1-\alpha_r).
\]
Since \(E_{n,m}\) reduces both
\(\psi_{n,m}^{(k)}(p_m)\) and \(\psi_{n,m}^{(k)}(\widehat q_m)\), the differences
\[
   \psi_{n,m}^{(k)}(p_m)-p_n,
   \qquad
   \psi_{n,m}^{(k)}(\widehat q_m)-\widehat q_n
\]
are supported on \(1-E_{n,m}\) and have norm at most one.  Hence, for every
\(\sigma\in T(B_n)\),
\[
   \bigl\|\psi_{n,m}^{(k)}(p_m)-p_n\bigr\|_{2,\sigma^{(k)}}^2
   \leq
   \sigma^{(k)}(1-E_{n,m})
   \leq
   \sum_{r=m}^{n-1}(1-\alpha_r),
\]
and therefore
\[
   \bigl\|\psi_{n,m}^{(k)}(p_m)-p_n\bigr\|_{2,u,B_n}
   \leq
   \left(\sum_{r=m}^{n-1}(1-\alpha_r)\right)^{1/2},
\]
and similarly for \(\widehat q_m\).  Since \(\sum_i(1-\alpha_i)<\infty\), both
sequences are Cauchy in \(M_k(B^u)\).  Their limits are projections because
multiplication is uniformly \(2\)-norm continuous on bounded sets.  Finally,
\(p_m\) and \(\widehat q_m\) have the same constant fiber rank in \(M_k(B_m)\) for
every \(m\), so every trace on \(B_m\), and hence every restriction of a trace on
\(B\), takes the same value on them.  Passing to the uniform \(2\)-norm limit gives
\[
   \tau^{(k)}(P_B)=\tau^{(k)}(Q_B),
   \qquad \tau\in T(B).
\]
\end{proof}

\begin{theorem}[The simple point-evaluation limit has no uniform property \(\Gamma\)]
\label{thm:simple-point-evaluation-no-gamma}
Let \(B\) be the simple AH algebra obtained in
Proposition~\ref{prop:sparse-simplefication-survival}.  Then \(B\) does not have
uniform property \(\Gamma\).  In particular, there is a simple, separable,
unital, nuclear AH algebra without uniform property \(\Gamma\).
\end{theorem}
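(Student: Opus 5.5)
The plan is to run the argument of Theorems~\ref{thm:nonsimple-nonequivalence} and~\ref{thm:nonsimple-no-uniform-gamma} inside $B$, using Proposition~\ref{prop:sparse-simplefication-survival} to supply the needed projections and traces. By part (1) of that proposition, $B$ is simple, separable, unital, nuclear and non-elementary, so it has no finite-dimensional representations. Suppose $B$ had uniform property $\Gamma$. Then, by \cite[Theorem 4.6]{CETW:IMRN} and the Evington--Tikuisis results quoted in Section~6, projections in $M_k(B^u)$ are compared by designated traces. Part (5) of the proposition gives $\tau^{(k)}(P_B)=\tau^{(k)}(Q_B)$ for all $\tau\in T(B)$, so there would be a partial isometry $V\in M_k(B^u)$ with $V^*V=P_B$ and $VV^*=Q_B$. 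It therefore suffices to show $P_B\not\sim Q_B$.

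To get a contradiction, approximate $V$ in $\|\cdot\|_{2,u}$ by contractions $x\in M_k(B_m)$ with $m$ large. Also choose $m$ so that $\|\kappa_m^B(p_m)-P_B\|_{2,u}$ and $\|\kappa_m^B(\widehat q_m)-Q_B\|_{2,u}$ are small; this is possible because these sequences converge by part (5). Set $y=\widehat q_m\,x\,p_m$. This is a bundle map $p_m\to q_m$ over $X_m$ with $0\le y(z)^*y(z)\le p_m(z)$ at every point, and $\|y^*y-p_m\|_{2,u}<\varepsilon$, with $\varepsilon$ controlled by the approximation errors. The step in Theorem~\ref{thm:nonsimple-nonequivalence} of estimating $\|y^*y-V^*V\|$ carries over verbatim, up to the extra error terms coming from $p_m$ versus $P_B$ and $\widehat q_m$ versus $Q_B$.

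The main obstacle is the pointwise step. In the simple limit we no longer have exact point-mass traces. Instead, part (3) gives for each $z\in X_m$ a trace $\tau_z$ whose measure on $X_m$ has an atom of mass at least $\delta$ at $z$. Since $(p_m-y^*y)^2\ge 0$, restricting to the atom gives
\[
\delta\,\operatorname{tr}_{k,s_m}\bigl((p_m(z)-y(z)^*y(z))^2\bigr)\le \tau_z^{(k)}\bigl((p_m-y^*y)^2\bigr)<\varepsilon^2 .
\]
On $\ker y(z)\subseteq p_m(z)$ the operator $p_m(z)-y(z)^*y(z)$ is the identity. Normalising by the fiber rank $k\operatorname{rank}(s_m)$ gives
\[
\operatorname{tr}_{k,s_m}\bigl((p_m(z)-y(z)^*y(z))^2\bigr)\ge\frac{d_m-r(z)}{k\operatorname{rank}(s_m)}\ge\frac{\delta\,(d_m-r(z))}{k\,d_m},
\]
using part (4), $\operatorname{rank}(q_m)/\operatorname{rank}(s_m)\ge\delta$, together with $\operatorname{rank}(p_m)=\operatorname{rank}(q_m)=d_m$. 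Combining the two bounds yields $r(z)>(1-k\varepsilon^2/\delta^2)\,d_m$. Choosing $\varepsilon$ with $k\varepsilon^2<\delta^2$, which is possible since $\delta>1-\eta>0$ is fixed in advance, gives $y(z)\neq 0$ for all $z\in X_m$.

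This contradicts the total degeneracy-forcing of $(p_m,q_m)$ established in Section~\ref{schubert}. That property survives telescoping, since the telescoped stages are still stages of the original system. Hence $P_B\not\sim Q_B$, and $B$ fails uniform property $\Gamma$. The care needed is in ordering the choices: fix $\varepsilon$ from $\delta$ and $k$ first, and only then pick $m$ large.
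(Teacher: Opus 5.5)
Your proposal is correct and follows essentially the same route as the paper. You approximate the partial isometry by a finite-stage contraction, compress it to a bundle map $p_m\to q_m$, and contradict total degeneracy-forcing using the atom of mass at least $\delta$ from part~(3) and the rank-ratio bound from part~(4), with the same threshold $k\varepsilon^2<\delta^2$ (the paper's $\gamma_0=\delta/\sqrt{k}$). The only cosmetic difference is that you derive a rank lower bound at every point, as in the non-simple theorem, whereas the paper goes straight to a point where the bundle map vanishes and uses $\operatorname{tr}_{k,s_m}(p_m(x))\geq\delta/k$ there.
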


\begin{proof}
By Proposition~\ref{prop:sparse-simplefication-survival}, the algebra \(B\) is a
simple, separable, unital, nuclear, non-elementary AH algebra.  It remains only
to prove the failure of uniform property \(\Gamma\).

We first show that
\[
   P_B\not\sim Q_B
   \qquad\text{inside } M_k(B^u).
\]
Suppose, toward a contradiction, that there is a partial isometry
\(v\in M_k(B^u)\) such that
\[
   v^*v=P_B,
   \qquad
   vv^*=Q_B.
\]
Set
\[
   \gamma_0=\frac{\delta}{\sqrt{k}}>0.
\]
Choose \(m\) so large and a contraction \(b\in M_k(B_m)\) so close to \(v\) in
uniform \(2\)-norm that, writing
\[
   \bar p=\kappa_m^B(p_m),
   \qquad
   \bar q=\kappa_m^B(\widehat q_m),
   \qquad
   \bar b=\kappa_m^B(b),
\]
we have
\[
   \|\bar p-P_B\|_{2,u,B}<\frac{\gamma_0}{12},
   \quad
   \|\bar q-Q_B\|_{2,u,B}<\frac{\gamma_0}{12},
   \quad
   \|\bar b-v\|_{2,u,B}<\frac{\gamma_0}{12}.
\]
This is possible by Proposition~\ref{prop:sparse-simplefication-survival} and by
Kaplansky density for the uniform tracial completion \cite{CCEGSTW}.  Put
\[
   c=\widehat q_m b p_m\in \widehat q_m M_k(B_m)p_m.
\]
Since \(v=Q_BvP_B\), and all elements involved are contractions,
\[
\begin{aligned}
   \|\kappa_m^B(c)-v\|_{2,u,B}
   &=
   \|\bar q\bar b\bar p-Q_BvP_B\|_{2,u,B} \\
   &\leq
   \|(\bar q-Q_B)\bar b\bar p\|_{2,u,B}
   +\|Q_B(\bar b-v)\bar p\|_{2,u,B}
   +\|Q_Bv(\bar p-P_B)\|_{2,u,B} \\
   &<
   \frac{\gamma_0}{4}.
\end{aligned}
\]
For contractions \(x,y\), one has
\[
   \|x^*x-y^*y\|_{2,u}\leq 2\|x-y\|_{2,u}.
\]
Therefore
\[
   \|\kappa_m^B(c^*c)-P_B\|_{2,u,B}<\frac{\gamma_0}{2},
\]
and hence
\[
   \|\kappa_m^B(c^*c-p_m)\|_{2,u,B}<\gamma_0.
\]

The element \(c\) is fiberwise a bundle map from the old obstruction bundle
\(p_m\) to the old bundle \(q_m\).  By the total degeneracy-forcing property
proved in Section~\ref{schubert}, there is a point \(x\in X_m\) such that
\[
   c(x)=0.
\]
Since \(c=\widehat q_m b p_m\) and \(\|b\|\leq 1\), we have \(0\leq c^*c\leq p_m\).
Thus \(p_m-c^*c\geq 0\), and at the point \(x\),
\[
   (p_m-c^*c)(x)=p_m(x).
\]
Choose the trace \(\tau_x\in T(B)\) supplied by
Proposition~\ref{prop:sparse-simplefication-survival}.  If \(\mu_x\) represents
\(\tau_x|_{B_m}\), then \(\mu_x(\{x\})\geq\delta\), and
Proposition~\ref{prop:sparse-simplefication-survival} also gives
\[
   \operatorname{tr}_{k,s_m}(p_m(x))\geq\frac{\delta}{k}.
\]
Consequently
\[
\begin{aligned}
   \|\kappa_m^B(p_m-c^*c)\|_{2,u,B}^2
   &\geq
   \tau_x^{(k)}\bigl(\kappa_m^B((p_m-c^*c)^2)\bigr) \\
   &=
   \int_{X_m}
   \operatorname{tr}_{k,s_m}\bigl((p_m(y)-c(y)^*c(y))^2\bigr)
   \,d\mu_x(y) \\
   &\geq
   \mu_x(\{x\})\operatorname{tr}_{k,s_m}(p_m(x)) \\
   &\geq
   \delta\cdot\frac{\delta}{k}
   =
   \gamma_0^2.
\end{aligned}
\]
Thus
\[
   \|\kappa_m^B(p_m-c^*c)\|_{2,u,B}\geq\gamma_0,
\]
contradicting the strict inequality above.  Hence \(P_B\not\sim Q_B\) in
\(M_k(B^u)\).

On the other hand, Proposition~\ref{prop:sparse-simplefication-survival} gives
\[
   \tau^{(k)}(P_B)=\tau^{(k)}(Q_B),
   \qquad \tau\in T(B).
\]
If \(B\) had uniform property \(\Gamma\), then comparison of projections by
designated traces in matrix amplifications of \(B^u\) would give
\[
   P_B\sim Q_B
   \qquad\text{inside }M_k(B^u)
\]
\cite{CETW:IMRN,CCEGSTW,ET:rrsr}.  This contradicts the non-equivalence just
proved.  Therefore \(B\) does not have uniform property \(\Gamma\).
\end{proof}

Finally, the simple example remains genuinely quadratic in the displayed AH
presentation.  Indeed, for the original topological branch we proved
\[
\frac{\dim(X_m)}{\operatorname{rank}(q_m)^2}=2
\]
for every \(m\).  Proposition~\ref{prop:sparse-simplefication-survival} gives
\[
\delta
\leq
\frac{\operatorname{rank}(q_m)}{\operatorname{rank}(s_m)}
\leq
1.
\]
Therefore
\[
2\delta^2
\leq
\frac{\dim(X_m)}{\operatorname{rank}(s_m)^2}
=
\frac{\dim(X_m)}{\operatorname{rank}(q_m)^2}
\left(
\frac{\operatorname{rank}(q_m)}{\operatorname{rank}(s_m)}
\right)^2
\leq
2
\]
and the simple AH presentation given above has quadratic dimension
growth.

\bibliography{references}
\end{document}